\newtheorem{theorem}{Theorem}
\newtheorem{lemma}[theorem]{Lemma}
\newtheorem{corollary}[theorem]{Corollary}
\newtheorem{claim}[theorem]{Claim}
\renewcommand{\phi}{\varphi}
\renewcommand{\emptyset}{\varnothing}
\newcommand{\eps}{\varepsilon}
\newcommand{\la}{\lambda}
\newcommand{\cE}{\mathcal E}
\newcommand{\cA}{\mathcal A}
\newcommand{\bE}{\mathbb E}
\def\done{{1\hskip-2.5pt{\rm l}}}
\renewcommand{\le}{\leqslant}
\renewcommand{\geq}{\geqslant}
\newcommand{\bR}{\mathbb R}
\newcommand{\bZ}{\mathbb Z}
\newcommand{\bP}{\mathbb P}
\newcommand{\sm}{{\raise0.3ex\hbox{$\scriptstyle \setminus$}}}
\def\bprf{\begin{proof}}
\def\eprf{\end{proof}}
\def\Sym{{\mathfrak S}} 
\def\sgn{\operatorname{sgn}}
\def\Slk{{\Sym_k^{\tt loc}}}
\def\Slkgd{{\Sym_k^{{\tt loc,good}}}}
\def\Sgk{{\Sym_k^{\tt gl}}}
\def\BB{{\Sym_n^{\tt bad}}}
\def\<{\langle}
\def\>{\rangle}
\begin{document}

\title{Real zeroes of random polynomials, II \\
Descartes' rule of signs and anti-concentration
on the symmetric group}

\author{Ken S\"{o}ze\thanks{290W 232nd Str, Apt 4b, Bronx, NY 10463, USA;
\texttt{sozeken65@gmail.com}}}

\date{\today}

\maketitle

\begin{abstract}
In this sequel to \cite{part1},
we present a different approach to bounding the expected number of real zeroes of random polynomials
with real independent identically distributed coefficients or more generally, exchangeable coefficients.
We show that the mean number of real zeroes does not grow faster than the logarithm of the degree.
The main ingredients of our approach are Descartes' rule of signs
and a new anti-concentration inequality
for the symmetric group.
This paper can be read independently of part~I in this series.
\end{abstract}

\section{The result on the expected number of real zeroes}
In this part, which can be read independently from the first part~\cite{part1} (see the latter for a brief history of the problem), we will bound the expected number of
real zeroes of random polynomials with real independent identically distributed coefficients or, more generally, exchangeable coefficients
(the definition of exchangeability is recalled a few lines later).

For a non-zero polynomial $P$ and a subset $A\subseteq \bR$, let $N(A,P)$ denote the number of zeroes of $P$, counted with multiplicity,  that fall in $A$.
We write $N(P)$ for $N(\bR,P)$ and $N^{*}(P)$ for $N(\bR\!\setminus\!\{0\},P)$. Everywhere in the paper, $C,c,C',c'$ etc.,
denote positive numerical constants (not depending on any parameters). However, the values of these constants may change from line to line.
With this notation, we are ready to state our main theorem and the key lemmas.

\begin{theorem}\label{thm:lognbound} Let $u_{0},\ldots ,u_{n}$, $n\ge 2$, be real numbers, not all equal to zero. Let $\pi$ be a uniform random permutation of
$\{0,1,\ldots ,n\}$. Let $ P(x) = \sum_{k=0}^n u_{\pi(k)} x^k $. Then
\[
\bE[N^{*}(P)]\le C\log n\,.
\]
\end{theorem}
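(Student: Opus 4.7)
The approach combines Descartes' rule of signs, applied to a family of M\"obius transforms of $P$, with a new anti-concentration estimate for linear statistics of the uniform random permutation $\pi\in\Sym_{n+1}$. We first reduce by symmetry: under $x\mapsto 1/x$ the polynomial $P$ becomes $x^nP(1/x)=\sum_k u_{\pi(n-k)}x^k$, whose coefficient sequence is again a uniform random permutation of $(u_0,\dots,u_n)$, and whose positive zeros in $(0,1)$ correspond one-to-one, preserving multiplicities, to positive zeros of $P$ in $(1,\infty)$. An analogous substitution $x\mapsto-x$ handles the negative axis (coefficients become $((-1)^k u_{\pi(k)})_k$, fitting into the same framework). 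Hence it suffices to bound $\bE[N((0,1],P)]$.

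Decompose $(0,1]$ dyadically as $\bigcup_{j\ge 0}(2^{-j-1},2^{-j}]$. On the $j$-th block $I_j$, apply a M\"obius substitution of the form $x=2^{-j-1}(1+2y)/(1+y)$, sending $y\in(0,\infty)$ monotonically onto $I_j$. Setting $Q_j(y)=(1+y)^n P(x(y))$, we have $N(I_j,P)=N^+(Q_j)$, so Descartes' rule yields $N(I_j,P)\le V(Q_j)$, the number of sign changes in the coefficient sequence of $Q_j$. A crude size estimate, using that $|P(x)|\ge |u_{\pi(0)}|-\|u\|_\infty\cdot x/(1-x)$ on $(0,1)$, shows that for $j$ sufficiently large compared to $\log(\|u\|_\infty/|u_{\pi(0)}|)$ the block $I_j$ contains no zeros of $P$; since $u_{\pi(0)}$ is uniform on the multi-set $\{u_0,\dots,u_n\}$, a mild anti-concentration argument bounds the expectation of this logarithm by $O(\log n)$, truncating the effective range to $j\lesssim \log n$.

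For the remaining $O(\log n)$ blocks, one needs $\bE[V(Q_j)]=O(1)$. The coefficients of $Q_j$ are linear statistics $b_m^{(j)}=\sum_k c_{m,k}^{(j)} u_{\pi(k)}$ with explicit weights, and here the new anti-concentration inequality for the symmetric group (the other main ingredient) enters; roughly, for any sufficiently ``spread'' coefficient vector $c$ and any threshold $t$,
\[
\sup_{t\in\bR}\bP_\pi\!\left(\Bigl|\textstyle\sum_k c_k u_{\pi(k)}-t\Bigr|\le\eps\right)\le \Psi(\eps;c,u),
\]
with $\Psi$ decaying quickly enough that summing the sign-change probabilities $\bP(b_m^{(j)} b_{m+1}^{(j)}<0)$ over $m$ yields only $O(1)$ per block.

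The principal difficulty is the anti-concentration inequality itself: since $(u_{\pi(k)})_k$ is strongly dependent (a uniform permutation of a fixed multi-set), classical Littlewood--Offord techniques that exploit independence fail, and a new argument tailored to $\Sym_{n+1}$ is required. A secondary obstacle is calibrating its quantitative strength: one cannot afford more than $O(1)$ sign changes per dyadic block, since a $\log n$ factor per block would only yield $\log^2 n$ overall.
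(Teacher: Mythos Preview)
Your proposal has two genuine gaps.

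\medskip
\textbf{The truncation to $O(\log n)$ dyadic blocks fails.} You argue that blocks $I_j$ with $j\gg\log(\|u\|_\infty/|u_{\pi(0)}|)$ contain no zeros and that the expectation of this logarithm is $O(\log n)$. But $u_{\pi(0)}$ is uniform on the multi-set $\{u_0,\dots,u_n\}$, so
\[
\bE\bigl[\log(\|u\|_\infty/|u_{\pi(0)}|)\bigr]=\frac{1}{n+1}\sum_{k=0}^n\log\frac{\|u\|_\infty}{|u_k|},
\]
which can be arbitrarily large: for $u_k=e^{-k^2}$ it is of order $n^2$. No ``mild anti-concentration'' can fix this, since the quantity depends only on the tail of the values $|u_k|$, over which the theorem imposes no control. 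Thus the sum over $j$ does not truncate as claimed.

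\medskip
\textbf{The per-block bound $\bE[V(Q_j)]=O(1)$ is unjustified, and the mechanism for obtaining it is missing.} After your M\"obius change the coefficients $b_m^{(j)}=\sum_k c_{m,k}^{(j)}u_{\pi(k)}$ are linear statistics, but to get $\sum_{m}\bP(b_m^{(j)} b_{m+1}^{(j)}<0)=O(1)$ you would need $\bP(b_m^{(j)} b_{m+1}^{(j)}<0)=O(1/n)$ for most $m$. An anti-concentration bound on a \emph{single} statistic, $\sup_t\bP(|b_m-t|\le\eps)\le\Psi(\eps)$, does not by itself control a sign change between two consecutive statistics; one needs a structural relation linking $b_m$ and $b_{m+1}$ that converts the sign change into a small-ball event. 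The paper supplies exactly such a relation by a different route: instead of any dyadic chop, it applies Descartes once to the Taylor series of $G(x)=P(x)/(1-x)^2$ on all of $(0,1)$. Writing $S_k=\sum_{j\le k}\lambda_j$ and $T_k=\sum_{j\le k}(k{+}1{-}j)\lambda_j$ for the Taylor coefficients of $P/(1-x)$ and $P/(1-x)^2$, one has $T_k-T_{k-1}=S_k$, so a sign change at $k$ forces $|T_k|\le|S_k|$. This is precisely the \emph{relative} anti-concentration event
\[
\Bigl|\sum_{j\le k} j\,\xi_j\Bigr|\le\Bigl|\sum_{j\le k}\xi_j\Bigr|
\]
for exchangeable $\xi_j$, and Lemma~\ref{lemma3} (derived from Lemma~\ref{lem:mainanticoncentration}) gives it probability $\le C/k$; summing over $k\le n$ yields $C\log n$. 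Your M\"obius coefficients $c_{m,k}^{(j)}$ obey no comparable two-term recursion, so the $C/k$ decay has no evident source. In short, the decisive idea is not a generic anti-concentration input plugged into a dyadic scheme, but the specific pairing of the $(1-x)^{-2}$ transform with a tailored relative bound on the symmetric group.
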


As an almost immediate corollary, we get a bound on the expected number of zeros for random polynomials with i.i.d.\! or,
more generally, exchangeable coefficients. Recall that random variables $\la_{1},\ldots ,\la_{n}$ are said to be exchangeable if the distribution of $(\la_{\pi(1)},\ldots
,\la_{\pi(n)})$
is the same as the distribution of $(\la_{1},\ldots ,\la_{n})$ for any $\pi\in \Sym_{n}$, where $\Sym_n$ is the group of permutations of $\{1,\ldots ,n\}$.
Note that if $\la_{k}$ are i.i.d., then they are exchangeable.

\begin{corollary}\label{corollaryiid} Let $P_{n}(x)=\la_{0}+\la_{1}x+\ldots +\la_{n}x^{n}$, $n\ge 2$.
\begin{enumerate}
\item If $\la_{0},\ldots ,\la_{n}$ are exchangeable random variables, then $\bE\big[N^{*}(P_{n}) \done_{\{P_{n}\not=0\}}\big]\le C\log n$.
\item If  $\la_{0},\ldots ,\la_{n}$ are i.i.d.\! with $p_{0}:=\bP\{\la_{0}=0\}<1$, then $\bE\big[N(P_{n}) \, \done_{\{P_{n}\not=0\}}\big]\le C\log n+\frac{p_{0}}{1-p_{0}}$.
\end{enumerate}
\end{corollary}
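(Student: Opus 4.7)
The plan is to deduce both parts of Corollary~\ref{corollaryiid} directly from Theorem~\ref{thm:lognbound} by a simple conditioning argument, together with a separate elementary count of the multiplicity of $0$ as a root in the i.i.d.\! case.

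For Part~1, I would first condition on the multiset of values of $\la_0,\ldots,\la_n$ (equivalently, on the non-increasing rearrangement of the coefficients). The very definition of exchangeability says that, conditionally on this multiset, the vector $(\la_0,\ldots,\la_n)$ has the same law as $(u_{\pi(0)},\ldots,u_{\pi(n)})$, where $u_0,\ldots,u_n$ is any fixed enumeration of the multiset and $\pi$ is a uniform random permutation of $\{0,1,\ldots,n\}$. On the event $\{P_n\not=0\}$ the values $u_k$ are not all zero, so Theorem~\ref{thm:lognbound} applies and yields
\[
\bE\bigl[N^{*}(P_n)\done_{\{P_n\not=0\}}\bigm|\la_0,\ldots,\la_n\text{ as a multiset}\bigr]\le C\log n;
\]
on the complementary event the indicator annihilates the contribution. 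Averaging over the multiset then gives Part~1.

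For Part~2, I would decompose $N(P_n)=N^{*}(P_n)+M$, where $M$ is the multiplicity of $0$ as a root of $P_n$. On $\{P_n\not=0\}$ the number $M$ equals the smallest index $k$ for which $\la_k\not=0$, so by independence
\[
\bE\bigl[M\,\done_{\{P_n\not=0\}}\bigr]=\sum_{k=0}^{n}k\,p_0^{k}(1-p_0)\le\sum_{k=0}^{\infty}k\,p_0^{k}(1-p_0)=\frac{p_0}{1-p_0}.
\]
Since i.i.d.\! variables are in particular exchangeable, Part~1 bounds $\bE\bigl[N^{*}(P_n)\done_{\{P_n\not=0\}}\bigr]\le C\log n$, and adding the two estimates yields Part~2.

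There is no serious obstacle here: the only mildly delicate issue is the handling of the null event $\{P_n=0\}$ in Part~1, cleanly absorbed by the indicator, together with the elementary geometric sum controlling the multiplicity at the origin in Part~2.
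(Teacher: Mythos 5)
Your proof is correct and follows essentially the same route as the paper's: condition on the multiset of coefficients and invoke exchangeability to reduce Part~1 to Theorem~\ref{thm:lognbound}, then for Part~2 split $N(P_n)=N^{*}(P_n)+N(\{0\},P_n)$ and sum the geometric series (your upper summation limit $n$ is slightly sharper than the paper's $n-1$, but both are absorbed by extending to infinity). No gaps.
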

The reason for the indicator function $\done_{\{P_{n}\not=0\}}$ is that there may be a positive probability for all coefficients to vanish (in which case $N(P_{n})$ is not defined).
If the coefficients are i.i.d., then $\bP\{P_{n}\not=0\}=1-p_{0}^{n+1}$, showing that restricting to this event leaves out only a tiny part of the probability space,
provided that $n$ is large and $p_0$ is not too close to $1$.

\bprf Condition on the multi-set of values $\{\la_{0},\ldots ,\la_{n}\}$
(multi-set means that $\la_k$ need not be distinct). Conditional on this multi-set being equal to $\{u_{0},u_{1},\ldots ,u_{n}\}$, by exchangeability,
the random vector $(\la_{0},\ldots ,\la_{n})$ has the same distribution as $(u_{\pi(0)},\ldots ,u_{\pi(n)})$, where $\pi$ is a uniform random permutation of $\{0,1,\ldots ,n\}$.
On the event $P_{n}\not=0$, not all $u_{i}$ can equal zero, and hence Theorem~\ref{thm:lognbound} applies to give the first part of the corollary.

For a non-zero polynomial $P$, we have $N(P)=N^{*}(P)+N(\{0\},P)$. By the first part, $\bE\big[N^{*}(P_{n}) \, \done_{\{P_{n}\not=0\}}\big]\le C\log n$.
 If the coefficients are i.i.d., the probability that $N(\{0\},P_{n})=k$ is $p_{0}^{k}(1-p_{0})$ and hence
 \begin{align*}
 \bE\big[N(\{0\},P_{n})\, \done_{\{P_{n}\not=0\}}\big] \; = \; (1-p_{0})\sum_{k=0}^{n-1}kp_{0}^{k} \; \le \; \frac{p_{0}}{1-p_{0}}\,,
 \end{align*}
where the last inequality follows by extending the sum up to infinity.
 \eprf

\medskip
Our approach in this paper
is based on Descartes' rule of signs and on the following ``relative anti-concentration bound''.
\begin{lemma}\label{lemma3}
Let $(\xi_1, \ldots , \xi_k)$ be a  vector of exchangeable random variables having a distribution that is absolutely continuous with respect to Lebesgue measure on $\bR^{k}$. Then
\begin{equation}\label{eq:1st-sum}
\bP \Bigl\{ \Bigl| \sum_{j=1}^k j\xi_j \Bigr| \le\Bigl| \sum_{j=1}^k \xi_j \Bigr|\Bigr\} \le\frac{C}k \,,
\end{equation}
and
\begin{equation}\label{eq:2nd-sum}
\bP \Bigl\{ \Bigl| \sum_{j=1}^k (-1)^j j\xi_j \Bigr| \le\Bigl| \sum_{j=1}^k (-1)^j \xi_j \Bigr|\Bigr\} \le
\frac{C}{k} \,.
\end{equation}
\end{lemma}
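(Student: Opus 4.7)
The plan is to reduce \eqref{eq:1st-sum} to a counting statement on $\Sym_k$ via exchangeability, then to bound that count by combining a Chebyshev estimate in the ``large $|S|$'' regime with a Hal\'asz-type anti-concentration inequality for permutation sums in the ``small $|S|$'' regime. The second inequality \eqref{eq:2nd-sum} should follow by the same argument applied to $\tilde\xi_j=(-1)^j\xi_j$, with the even and odd indices treated as two separate exchangeable blocks.

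\textbf{Reduction.} Absolute continuity makes the $\xi_j$ almost surely distinct, and conditioning on the unordered multi-set reduces the problem to the following: for distinct reals $x_1,\dots,x_k$ with $S=\sum_j x_j$, $\bar x=S/k$, $V=\sum_j(x_j-\bar x)^2$, and $U_\pi:=\sum_j j\,x_{\pi(j)}$, show that
\[
\#\bigl\{\pi\in\Sym_k:|U_\pi|\le|S|\bigr\}\le C(k-1)!.
\]
Centring via $U_\pi=W_\pi+\tfrac{k+1}{2}S$ with $W_\pi=\sum_j(j-\tfrac{k+1}{2})(x_{\pi(j)}-\bar x)$, the event becomes ``$W_\pi$ falls in an interval $J$ of length $2|S|$ whose centre lies at distance between $\tfrac{k-1}{2}|S|$ and $\tfrac{k+3}{2}|S|$ from $0$,'' while a direct computation gives $\bE W_\pi=0$ and $\operatorname{Var}(W_\pi)=\tfrac{k(k+1)V}{12}\asymp k^2V$ under the uniform measure on $\Sym_k$.

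\textbf{Regime split.} In the \emph{large} regime $|S|\gtrsim\sqrt{kV}$, Chebyshev gives
\[
\bP(|U_\pi|\le|S|)\le\bP\bigl(|W_\pi|\ge\tfrac{k-1}{2}|S|\bigr)\le\frac{CV}{S^2}\le\frac{C}{k}.
\]
In the \emph{small} regime $|S|\lesssim\sqrt V$, a density bound of the form $\sup_t\bP(W_\pi\in[t,t+L])\le CL/(k\sqrt V)$ yields $\bP\le 2|S|\cdot C/(k\sqrt V)\le C/k$. The intermediate range $\sqrt V\lesssim|S|\lesssim\sqrt{kV}$ is closed either by strengthening the density bound with a Gaussian tail factor $\exp(-c\,\operatorname{dist}(J,0)^2/(k^2V))$ or by sub-Gaussian (higher-moment) Markov bounds for $W_\pi$.

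\textbf{Main obstacle.} The density estimate for $W_\pi$ --- advertised in the abstract as the ``new anti-concentration inequality for the symmetric group'' --- is where the real work lies. My plan is to couple a uniform $\pi\in\Sym_k$ with $\lfloor k/2\rfloor$ independent random disjoint transpositions on the paired positions $(1,k),(2,k-1),\dots$: the $2^{\lfloor k/2\rfloor}$ resulting permutations form a family on which $W_\pi$ varies as a Rademacher sum $\sum_i\varepsilon_i u_i$ with $u_i=(k+1-2i)(y_{\pi(i)}-y_{\pi(k+1-i)})$ and $y_j=x_j-\bar x$. Since $\bE\sum_i u_i^2\asymp k^2V$, the classical Hal\'asz bound for Rademacher sums gives density $O(1/(k\sqrt V))$ inside each family, and summing over the $k!/2^{\lfloor k/2\rfloor}$ families yields the estimate. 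The technical challenges are to control exceptional realisations with $\sum_i u_i^2$ atypically small and to upgrade the density bound with the Gaussian tail decay required to close the intermediate regime.
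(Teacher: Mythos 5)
Your reduction (condition on the multi-set, count bad permutations), centering of $U_\pi$ to $W_\pi$, and the variance computation $\operatorname{Var}(W_\pi)=\tfrac{k(k+1)V}{12}$ are all correct and essentially coincide with what the paper does (its Lemma~\ref{cor:lemtosheppsconjecture} is exactly your counting problem). You also correctly identify the crux: a density bound for $W_\pi$ at scales down to $O(\sqrt V)$, with tail decay in $L$. The paper's Lemma~\ref{lem:mainanticoncentration} packages exactly this, with an exponential factor $e^{-c|L|}$ that handles all three of your regimes in one shot (cover the interval by unit pieces, apply the lemma, and use that $\lceil x\rceil e^{-cx}$ is bounded), so the Chebyshev / small / intermediate split you propose is not needed once that lemma is available, though it is a reasonable alternative organization.

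The genuine gap is in the density bound itself. Your plan is to restrict to the coset $\pi H$, $H=\langle(1\,k),(2\,k-1),\dots\rangle$, observe that $W_\pi$ varies across the coset as a Rademacher sum $\sum_i\varepsilon_iu_i$ with $u_i=(k+1-2i)(y_{\pi(i)}-y_{\pi(k+1-i)})$, and invoke Hal\'asz to get density $O\bigl(1/(k\sqrt V)\bigr)$ because $\bE\sum u_i^2\asymp k^2V$. But Littlewood--Offord/Hal\'asz-type bounds do not give density $\asymp 1/\|u\|_2$; they give, at scale $\ell$, density at most $C/\bigl(\ell\sqrt{N(\ell)}\bigr)$ where $N(\ell)=|\{i:|u_i|\ge\ell\}|$, and there are only $m=\lfloor k/2\rfloor$ coefficients. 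With $|u_i|\asymp|k+1-2i|\sqrt{V/k}$ one finds, optimizing over $\ell$, a density bound no better than $\asymp 1/\sqrt{kV}$: a full factor $\sqrt k$ worse than what you need (and hence your final bound would be $C/\sqrt k$, giving $\bE N^{*}\lesssim\sqrt n$ rather than $\log n$, i.e.\ only the Bloch--P\'olya/Kolmogorov--Rogozin bound mentioned in the paper's remark). To recover the extra $\sqrt k$ one would need to exploit the arithmetic multi-scale spread of the $u_i$ in a much sharper way than a single Littlewood--Offord application; this is precisely the heavy lifting that the paper's Lemma~\ref{lem:mainanticoncentration} performs, and it does so by a different route entirely (a two-step argument, first at scale $\sqrt n$ via a weighted functional $F$ on the order-statistics simplex and the Dirichlet distribution, then a passage from scale $\sqrt n$ to scale $1$ by a combinatorial coset decomposition of $\Sym_n$). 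So your Rademacher-coset idea is a genuinely different line of attack, but as stated it does not close.

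Separately, the claim that the second estimate ``follows by the same argument applied to $\tilde\xi_j=(-1)^j\xi_j$'' is too quick: multiplying by alternating signs destroys exchangeability, which is the whole reason the first estimate can use all of $\Sym_k$. Your parenthetical idea of treating odd and even indices as two exchangeable blocks is roughly the paper's strategy when $k$ is odd (the product subgroup $\Sym_k^{\tt e}\times\Sym_k^{\tt o}$), but when $k$ is even the paper has to switch to an entirely different subgroup decomposition (``local'' pair-swaps and ``global'' pair-permutations) and a further argument sorting local permutations into good and bad ones via Bernstein--Hoeffding. So the second estimate requires substantial additional work that the proposal does not address.
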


\medskip In turn, Lemma~\ref{lemma3} will be deduced from an anti-concentration bound for linear forms
on the symmetric group $\Sym_n$. The following lemma may be considered the main technical result of this paper and potentially of interest beyond its application to proving our main
theorem.
\begin{lemma}\label{lem:mainanticoncentration}
Let $n\geq 2$ and let  $w_{1},\ldots ,w_{n}$ be real numbers such that $\sum_{i=1}^{n}w_{i}=0$ and $\sum_{i=1}^{n}w_{i}^{2}=1$. Let $\pi$ be a random permutation
uniformly distributed on $\Sym_{n}$. Then, for every $L\in \bR$, we have
\begin{align}
\label{eq:ineqatunitscale}
\bP\Bigl\{\Big| \sum_{i=1}^{n}w_{i}\pi (i) - Ln \Big| \le1\Bigr\}\le\frac{C}{n}\, e^{-c|L|}.
\end{align}
\end{lemma}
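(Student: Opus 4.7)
Plan: The strategy is to combine a global sub-Gaussian estimate for $S := \sum_{i=1}^{n} w_i\pi(i)$ with a uniform anti-concentration bound at scale $1$, and glue the two together by a Chernoff tilting argument. Since the map $\pi(i)\mapsto(n+1)-\pi(i)$ preserves the uniform measure on $\Sym_n$ and, thanks to $\sum w_i=0$, negates $S$, we may restrict to $L\ge 0$ throughout.

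The first ingredient is the Gaussian-type estimate $\bE e^{\tau S}\le e^{C\tau^{2} n^{2}}$ valid for every $\tau\in\bR$, which I would derive by applying Azuma's inequality (or the Maurey bounded-difference estimate for sampling without replacement) to the martingale $M_{k}=\bE[S\mid \pi(1),\dots,\pi(k)]$ produced by the Fisher--Yates exposure of $\pi$: the $k$-th increment is bounded by $O(n|w_{k}|)$, so the normalization $\sum w_{k}^{2}=1$ yields the claim.

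The second and main ingredient is the uniform anti-concentration $\sup_{a}\bP\{|S-a|\le 1\}\le C/n$. I would approach it through Esseen's smoothing inequality, which reduces the task to bounding $\int_{-1}^{1}|\phi_{S}(t)|\,dt$ where $\phi_{S}(t)=\bE e^{itS}$. For $|t|\le 1/n$ the sub-Gaussian estimate above gives $|\phi_{S}(t)|\le e^{-cn^{2}t^{2}}$, whose integral over that range is $O(1/n)$. The intermediate regime $1/n\le|t|\le 1$ is the main obstacle: $\phi_{S}(t)$ equals $(n!)^{-1}$ times a permanent of unit-modulus entries, which does not factor as in the i.i.d.\ case, and one must exploit genuine combinatorial cancellation. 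I would try a swap-based Littlewood--Offord argument on $\Sym_{n}$: along a pairing $\pi\leftrightarrow\pi\circ(i,j)$ the value $S$ changes by $(w_i-w_j)(\pi(j)-\pi(i))$, and exploiting many disjoint pairings together with the hypotheses $\sum w_i=0$ and $\sum w_i^{2}=1$ (which in particular force $\Omega(n)$ of the $|w_i|$ to be $\ge c/\sqrt{n}$ and of mixed signs) should produce the missing factor of $1/n$.

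For $L\ge 1$ I would combine the two ingredients by tilting: setting $d\bP_\tau/d\bP = e^{\tau S}/\bE e^{\tau S}$ with $\tau\asymp L/n$, the same anti-concentration should persist on the tilted measure (the swap argument survives the reweighting because the Radon--Nikodym ratio is bounded along each swap orbit), yielding $\bP_{\tau}\{|S-Ln|\le 1\}\le C/n$. Reversing the tilt,
\[
\bP\{|S-Ln|\le 1\}\le \bE e^{\tau S}\cdot e^{-\tau(Ln-1)}\cdot\bP_{\tau}\{|S-Ln|\le 1\}\le \frac{C}{n}\,e^{C\tau^{2}n^{2}-\tau Ln+\tau}\le\frac{C}{n}\,e^{-cL^{2}},
\]
which is in fact stronger than what is claimed; the regime $L\in[0,1]$ is already covered by the unconditional bound from the previous paragraph. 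The principal difficulty is thus the Fourier/combinatorial bound in the intermediate range of $t$, together with checking that it transfers cleanly to the tilted measure.
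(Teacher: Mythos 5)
Your proposal takes a route that is genuinely different from the paper's (which works entirely on the physical side, comparing the discrete measure $\tfrac1{n!}|\cdot|$ on $\Sym_n$ against a continuous comparison measure on $[0,1]^n$, first at scale $\sqrt n$ via a clever test function and then refining to unit scale by a careful combinatorial partition of $\Sym_n$). But the proposal has a genuine gap, and it sits exactly where you yourself flag it: the Fourier estimate in the intermediate range $1/n\le|t|\le1$. Esseen's lemma reduces $\sup_a\bP\{|S-a|\le1\}\le C/n$ to $\int_{-1}^1|\phi_S(t)|\,dt\lesssim 1/n$, and the sub-Gaussian input only controls $|t|\lesssim 1/n$. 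The permanent $\phi_S(t)=\tfrac1{n!}\mathrm{per}\bigl[e^{itw_jk}\bigr]$ does not factor, and your proposed ``swap-based Littlewood--Offord on $\Sym_n$'' is a direction, not an argument: along a transposition $(i,j)$ the quantity $S$ changes by $(w_i-w_j)(\pi(j)-\pi(i))$, which can be as small as $O(1/\sqrt n)$ or as large as $O(n)$ depending on $(i,j)$ and $\pi$, and the normalizations $\sum w_i=0$, $\sum w_i^2=1$ do not by themselves give you a large family of disjoint swaps of controlled size for each $\pi$. That estimate is the entire content of the lemma; without it the proof is not there.

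Two secondary points deserve scrutiny as well. First, the sub-Gaussian MGF bound $\bE e^{\tau S}\le e^{C\tau^2n^2}$ does not follow from Azuma with increments $O(n|w_k|)$: exposing $\pi(1),\dots,\pi(k)$, the increment of $M_k=\bE[S\mid\pi(1),\dots,\pi(k)]$ contains not only $w_{k+1}(\pi(k+1)-\bar R_{k+1})$ but also $(\bar R_{k+1}-\bar R_k)\sum_{i\le k}w_i$, where $\bar R_k$ is the mean of the unexposed values; the latter term is of order $\tfrac{n}{n-k}\cdot|\sum_{i\le k}w_i|$ and blows up as $k\to n$, so the claimed bounded-difference structure fails and a genuinely different concentration input for sampling-without-replacement is needed. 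Second, the assertion that the anti-concentration at scale $1$ ``transfers cleanly to the tilted measure because the Radon--Nikodym ratio is bounded along each swap orbit'' is unjustified: the ratio along a swap is $e^{\tau(w_i-w_j)(\pi(j)-\pi(i))}$, and with $\tau\asymp L/n$ and $\max_i|w_i|$ potentially of order $1$, this factor can be as large as $e^{cL}$, which is not bounded uniformly in $L$. So even granting the unconditional $C/n$ bound, the tilting step needs a real argument (for instance, an Esseen bound on the tilted characteristic function $\bE e^{(\tau+it)S}/\bE e^{\tau S}$), not just an assertion. In short, the scaffolding is plausible and would, if filled in, yield the even stronger $e^{-cL^2}$ decay the paper mentions as possible; but the load-bearing estimate on the characteristic function in the middle frequency range is missing, and the tilting step is not secured.
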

We remark that it is possible to strengthen the statement to have $\frac{C}n\, e^{-cL^{2}}$ on the right hand side
(see a brief discussion in Section~\ref{sec:proofofprobestimates}),
but we shall not need that improvement in this paper. Although the main application of this lemma is to prove Lemma~\ref{lemma3}, we shall also use it in several other smaller ways.
For this purpose, we record an easy corollary of Lemma~\ref{lem:mainanticoncentration}.
\begin{corollary}\label{cor:ineqatatom} Suppose $u_{1},\ldots, u_{n}$ are real numbers, not all equal. Let $\pi$ be a random permutation
uniformly distributed on $\Sym_{n}$. Then, for every $x\in \bR$, we have
\begin{align} \nonumber
\bP\Bigl\{\sum_{i=1}^{n}u_{i}\pi(i) =x\Bigr\}\le\frac{C}{n}.
\end{align}
\end{corollary}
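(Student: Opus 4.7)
The plan is to reduce Corollary \ref{cor:ineqatatom} to Lemma \ref{lem:mainanticoncentration} by an affine change of variables that normalizes the weights. The key observation is that $\sum_{i=1}^{n}\pi(i)=\frac{n(n+1)}{2}$ is a deterministic quantity independent of $\pi$, so we have complete freedom to shift each $u_{i}$ by a common constant without altering the conditional distribution of $\sum u_{i}\pi(i)$ beyond a fixed translation.

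Concretely, I would set $\bar u=\frac1n\sum_{i=1}^n u_i$ and $\sigma=\bigl(\sum_{i=1}^n (u_i-\bar u)^2\bigr)^{1/2}$. Since the $u_i$ are not all equal, $\sigma>0$. Define $w_i=(u_i-\bar u)/\sigma$; then $\sum w_i=0$ and $\sum w_i^2=1$, so the vector $(w_1,\ldots,w_n)$ satisfies the hypotheses of Lemma~\ref{lem:mainanticoncentration}. A direct computation gives
\[
\sum_{i=1}^n u_i\pi(i) \;=\; \sigma \sum_{i=1}^n w_i\pi(i) + \bar u\cdot\frac{n(n+1)}{2},
\]
so the event $\{\sum u_i\pi(i)=x\}$ is identical to $\{\sum w_i\pi(i)=y\}$ where $y:=(x-\bar u\,n(n+1)/2)/\sigma$ depends only on the (deterministic) data $u_1,\ldots,u_n$ and $x$, not on $\pi$.

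To conclude, I would apply Lemma~\ref{lem:mainanticoncentration} with $L=y/n$. The single-point event $\{\sum w_i\pi(i)=y\}$ is contained in $\{|\sum w_i\pi(i)-Ln|\le 1\}$ since the difference in question is exactly $0$. The lemma therefore yields
\[
\bP\Bigl\{\sum_{i=1}^n u_i\pi(i)=x\Bigr\} \;\le\; \frac{C}{n}\,e^{-c|L|} \;\le\; \frac{C}{n},
\]
which is the desired bound. There is no real obstacle here: the corollary is genuinely an immediate consequence of the lemma, with the only minor subtlety being the need to discard the exponential factor $e^{-c|L|}$ (which would give a sharper bound but is not required here) and to check that the normalization $\sigma>0$ is valid, both of which are handled by the hypothesis that the $u_i$ are not all equal.
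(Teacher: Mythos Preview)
Your proof is correct and follows essentially the same route as the paper's: normalize by setting $w_i=(u_i-\bar u)/\sigma$ so that $\sum w_i=0$, $\sum w_i^2=1$, and then apply Lemma~\ref{lem:mainanticoncentration} with $Ln=(x-\tfrac12 n(n+1)\bar u)/\sigma$. The paper's proof is terser but the argument is identical.
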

\bprf
Set $w_{i}=(u_{i}-\bar{u})/\sigma$ where $\bar{u}=\frac{1}{n}\sum_{i=1}^{n}u_{i}$ and $\sigma^{2}=\sum_{i=1}^{n}(u_{i}-\bar{u})^{2}$. The assumption that
$u_{1},\ldots ,u_{n}$ are not all equal ensures that $\sigma>0$ and hence $w_{i}$ are well-defined, $\sum_{i=1}^{n}w_{i}=0$ and $\sum_{i=1}^{n}w_{i}^{2}=1$.
Now apply Lemma~\ref{lem:mainanticoncentration} with $Ln=(x-\tfrac12 n(n+1)\bar u)/\sigma$. The corollary follows.
\eprf

\section{Descartes' rule of signs and deduction of Theorem~\ref{thm:lognbound} from Lemma~\ref{lemma3}}

Recall that the number of sign-changes of a finite or infinite sequence $b=(b_{0},b_{1},\ldots)$ of real numbers is defined as the supremum of all $k$
for which there exist indices $0\le i_{0}<i_{1}<\ldots <i_{k}$ such that $b_{i_{j}}b_{i_{j-1}} <0$ for each $j=1, 2,\ldots , k$.
Let $S(b)$ denote the number of sign changes of $b$.
\begin{lemma}[Descartes' rule of signs~{\cite[Chapter~5, Problem~38]{PolyaSzego}}]
Let $ f(x)=b_{0}+b_{1}x+b_{2}x^{2}+\ldots$ be a non-zero power series with real coefficients and convergent in $(-R,R)$.
Let $N^{+}(f)$ be the number of zeroes (counted with multiplicity) of $f$ in the interval $(0,R)$.
Then,  $N^{+}(f)\le S(b)$.
\end{lemma}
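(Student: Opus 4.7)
The plan is to induct on $S(b)$, combining Rolle's theorem with a weighted-differentiation operator that simultaneously decreases the number of positive zeros by at most one and the number of sign changes by exactly one. If $S(b)=\infty$ the statement is vacuous, so we may assume $S(b)<\infty$.

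The base case $S(b)=0$ is immediate: all non-zero $b_k$ share a common sign, so $f$ has constant sign on $(0,R)$ and $N^{+}(f)=0$. For the inductive step, suppose $S(b)=s\ge 1$. After factoring out the smallest power of $x$ with a non-zero coefficient (which changes neither $N^{+}(f)$ nor $S(b)$), we may assume $b_0\ne 0$. Choose indices $j_1<j_2$ of two consecutive non-zero coefficients with $b_{j_1}b_{j_2}<0$, pick any $\alpha\in(j_1,j_2)$, and set
\[
g(x):=\sum_{k\ge 0}(k-\alpha)b_k x^k,\qquad h(x):=x^{-\alpha}f(x).
\]
Then $g$ has the same radius of convergence as $f$, and a direct calculation gives $x^{\alpha+1}h'(x)=g(x)$ on $(0,R)$.

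The positive zeros of $h$ coincide with those of $f$ with matching multiplicities, so by Rolle's theorem applied with multiplicities (an order-$m$ zero of $h$ contributes an order-$(m-1)$ zero to $h'$, and there is at least one additional zero of $h'$ strictly between any two distinct consecutive zeros of $h$), we obtain $N^{+}(g)\ge N^{+}(f)-1$. On the coefficient side, the map $b_k\mapsto(k-\alpha)b_k$ flips the sign of every coefficient with $k<\alpha$ and preserves it for $k>\alpha$. A short bookkeeping check shows that this destroys the sign change between $j_1$ and $j_2$ while introducing no new ones, so $S(g)=S(f)-1$. Applying the inductive hypothesis to $g$,
\[
N^{+}(f)-1\;\le\;N^{+}(g)\;\le\;S(g)\;=\;S(f)-1,
\]
and the result follows.

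The step I expect to be the main obstacle is the sign-count bookkeeping: the whole argument depends on $b_k\mapsto(k-\alpha)b_k$ strictly decreasing $S$ by \emph{exactly} one, which is precisely why $\alpha$ is placed between two non-zero coefficients of opposite sign (so the sign flip on the low-index side cancels the original sign change at the $j_1,j_2$ boundary, and no new sign change is produced on either side). The multiplicity-aware version of Rolle in the zero-counting step is standard but also needs to be stated with some care, particularly when $f$ has positive zeros of order greater than one.
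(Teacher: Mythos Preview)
The paper does not prove this lemma; it merely states it with a citation to P\'olya--Szeg\H{o}. So there is no ``paper's own proof'' to compare against.

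Your argument is correct and is essentially the classical proof. The key device---applying the operator $f\mapsto x^{\alpha+1}\frac{d}{dx}\bigl(x^{-\alpha}f\bigr)$ with $\alpha$ chosen strictly between two consecutive nonzero coefficients of opposite sign---does exactly what you claim: on the analytic side it drops the positive-zero count by at most one (via Rolle with multiplicities), and on the combinatorial side the sign sequence of the nonzero coefficients becomes $(-\epsilon_0,\ldots,-\epsilon_p,\epsilon_{p+1},\ldots)$, which kills precisely the sign change at position $p$ and creates none elsewhere. The radius-of-convergence check is routine since $|k-\alpha|^{1/k}\to 1$, and $g$ is nonzero because $(j_1-\alpha)b_{j_1}\ne 0$.

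Two minor remarks. First, the reduction to $b_0\ne 0$ is harmless but not actually used anywhere in the rest of your argument; you may as well drop it. Second, it is worth noting explicitly that the induction on $S(b)$ also handles the possibility $N^{+}(f)=\infty$: if $f$ had infinitely many zeros in $(0,R)$ then so would $g$, contradicting the inductive bound $N^{+}(g)\le s-1<\infty$.
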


\medskip
In the remaining part of this section, we prove Theorem~\ref{thm:lognbound} assuming that Lemma~\ref{lemma3} and Lemma~\ref{lem:mainanticoncentration} are true.

\subsection{Proof of Theorem~\ref{thm:lognbound} assuming Lemma~\ref{lemma3} and Corollary~\ref{cor:ineqatatom}}
Set $P(x)=\sum_{j=0}^n \la_j x^j$ with $\la_j = u_{\pi(j)}$ where $\pi$ is a uniform random
permutation.
Observe that $x^n P_n(1/x)=\la_{n}+\la_{n-1}x+\ldots +\la_{0}x^{n}$ is a random polynomial with the same distribution as $P_n$. Therefore, taking
$I=(0, 1)$ and $-I=(-1, 0)$, we can write
\begin{align}\label{eq:foursummands}
\bE [N^{*}(P_n)] =
2 \bE [N(I, P_n)] + 2 \bE [N(-I, P_n) ]
+  \bE [N(\{ 1 \}, P_n)] + \bE [N(\{ -1 \}, P_n)]\,,
\end{align}
where  $N(\{\pm 1\},P_{n})$ are the multiplicities of zeroes at $\pm 1$.

\medskip\noindent{\bf Bound for }$\bE[N(I,P_{n})]$: Consider the Taylor series with radius of convergence at least $1$:
\[
F(x) = \frac{P_n (x)}{1-x} = \sum_{k\geq 0} S_k x^k
\quad {\rm and} \quad
G(x) = \frac{P_n (x)}{(1-x)^2} = \sum_{k\geq 0} T_k x^k\,.
\]
Then
\[
S_k =
\begin{cases}
\la_0 + \la_1 + \ldots + \la_k & {\rm\ if\ } k \le  n, \\
S_n & {\rm\ if\ } k > n,
\end{cases}
\]
and
\[
T_k =
\begin{cases}
S_0 + S_1 + \ldots + S_k = (k+1)\la_0 + k\la_1 + \ldots + \la_k & {\rm\ if\ } k \le n, \\
T_n + (k-n)S_n & {\rm\ if\ } k > n\,.
\end{cases}
\]
Firstly, $N(I, P_n) = N(I, G) \le S((T_{k})_{k\ge 0})$, by Descartes' rule. Secondly, $S((T_{k})_{k\ge 0})\le 1+S(T_{0},T_{1},\ldots ,T_{n})$
since, beyond $n$, there could be at most one change of sign in the sequence $(T_k)_{k\ge n}$. Thus,
\begin{equation}\label{eq:upprboundonI}
\bE[N(I,P_{n})] \le 1+\bE[S(T_{0},\ldots ,T_{n})].
\end{equation}

Recall that $\la_{j}=u_{\pi(j)}$ where $\pi$ is a uniform random permutation. Assume without loss of generality that on the same probability space,
we have standard Gaussian random variables $Z_{j}$, $0\le j\le n$, that are independent among themselves and independent of $\pi$.
Set $\la_{j}^{\eps}=\la_{j}+\eps Z_{j}$ for  $\eps>0$.
Let $S^\eps_k$ and  $T^{\eps}_{k}$ be defined using $(\la_{j}^{\eps})_{0\le j\le n}$ exactly as $S_k$ and $T_{k}$ are defined in terms of $(\la_{j})_{j\le n}$. Then,
\[
S(T_{0}^{\eps},\ldots ,T_{n}^{\eps})\le \sum_{k=1}^{n}\done_{\{T_{k}^{\eps}T_{k-1}^{\eps}\le 0\}}.
\]
Since $T_{k}^{\eps}=T_{k-1}^{\eps}+S_{k}^{\eps}$, to have $T_{k}^{\eps}T_{k-1}^{\eps}\le 0$, it is necessary that $|T_{k}^{\eps}|\le |S_{k}^{\eps}|$.
Now, for any fixed $\eps>0$, the random variables $\la_{j}^{\eps}$, $0\le j\le n$, are exchangeable, and have an absolutely continuous distribution on $\bR^{n}$.
By conclusion~\eqref{eq:1st-sum} in Lemma~\ref{lemma3}, it immediately follows that
$\bP\{|T_{k}^{\eps}|\le |S_{k}^{\eps}|\}\le C/k$ and hence $\bE[S(T_{0}^{\eps},\ldots ,T_{n}^{\eps})]\le C\log n$. Observe that $C$ does not depend on $\eps$ (or anything else).

Since sign changes are defined by strict inequalities, we see that almost surely,
\[
S(T_{0},\ldots, T_{n})\le \liminf_{\eps\to 0} S(T_{0}^{\eps},\ldots ,T_{n}^{\eps}).
\]
and hence, by Fatou's lemma $\bE[S(T_{0},\ldots,T_{n})]\le C\log n$. Plugging back this conclusion into \eqref{eq:upprboundonI}, we get $\bE[N(I,P_{n})]\le C\log n$.

\medskip\noindent{\bf Bound for }$\bE[N(-I,P_{n})]$: Next we bound $\bE[N(-I,P_{n})]$. Replacing $x$ by $-x$, we have the analogue of \eqref{eq:upprboundonI}:
\begin{equation}\label{eq:upprboundon-I}
N(-I,P_{n}) \le 1+S(T_{0}',\ldots ,T_{n}')
\end{equation}
where  $S_{k}'=\sum_{j=0}^{k}(-1)^{j}\la_{j}$ and $T_{k}'=S_{0}'+\ldots +S_{k}'=\sum_{j=0}^{k}(k+1-j)(-1)^{j}\la_{j}$.

Exactly as before, we define $\la_{j}^{\eps}=\la_{j}+\eps Z_{j}$, where $Z_{j}$ are independent standard Gaussians that are also independent of
$\pi$. Define $S_{j}'^{\eps}$ and $T_{j}'^{\eps}$ in terms of $(\la_{j}^{\eps})_{j\le n}$ just as $S_{j}'$ and $T_{j}'$ are defined in terms of $(\la_{j})_{j\le n}$.
By the lower semi-continuity of sign changes, by letting $\eps$ decrease to zero, we may deduce that $\bE[S(T_{0}',\ldots ,T_{n}')]$ is bounded by $C\log n$ provided we  prove the same
bound for $\bE[S(T_{0}'^{\eps},\ldots ,T_{n}'^{\eps})]$. To do that, we write
\begin{align*}
\bE[S(T_{0}'^{\eps},\ldots ,T_{n}'^{\eps})]&\le \sum_{k=1}^{n}\bP\{T_{k}'^{\eps}T_{k-1}'^{\eps}\le 0\} \\
&\le \sum_{k=1}^{n}\bP\{|T_{k}'^{\eps}|\le |S_{k}'^{\eps}|\}.
\end{align*}
Now, use the bound \eqref{eq:2nd-sum} in Lemma~\ref{lemma3} to get $\bP\{|T_{k}'^{\eps}|\le |S_{k}'^{\eps}|\}\le C/k$. Using this bound in \eqref{eq:upprboundon-I}, we get the
inequality $\bE[N(-I,P_{n})]\le C\log n$.

\medskip\noindent{\bf Bound for }$\bE[N(\{\pm 1\},P_{n})]$: If $ N(\{1\},P_{n}) \geq 2$, we must have $P_{n}(1)=P_n'(1)=0$, and therefore $\sum_{k=0}^n (k+1) u_{\pi(k)} = 0$.
Obviously, this cannot happen if all $u_k$ are equal and not zero.
Then, Corollary~\ref{cor:ineqatatom} shows that this event has probability at most $C/n$. Therefore, $\bE[N(\{1\},P_{n})]\le C$ since the root at $1$ has multiplicity at most $n$.

Now we turn to the root at $-1$. If $N(\{-1\},P_{n})\ge 2$, then $P_{n}(-1)=P_{n}'(-1)=0$ and hence, $\sum_{k=1}^{n+1}(-1)^{k}k\la_{k-1}=0$. Using exchangeability, the probability of
this event  is the same as the probability of
\begin{equation}\label{eq:piequalsigma}
\sum_{k\in E_{n}}\pi(k)\la_{k-1}=\sum_{k\in O_{n}}\sigma(k)\la_{k-1}
\end{equation}
where $\pi$ and $\sigma$ are uniform random permutations of  $E_{n}=2\bZ\cap\{1,2,\ldots,n+1\}$ and $O_{n}=(2\bZ+1)\cap\{1,2\ldots ,n+1\}$ respectively, and $\pi, \sigma$ are
independent of each other and of $\la_{0},\ldots ,\la_{n}$. Fix the values of  $\la_{0},\ldots,\la_{n}$ and consider three  cases.

\smallskip\noindent{\bf Case 1}: Suppose $\la_{k-1}$, $k\in E_{n}$ are not all equal. In this case, fix $\sigma$ (i.e., condition on $\sigma$) so that the right hand side of
\eqref{eq:piequalsigma} is not random anymore. We may also write $\pi(k)=2\pi'(k/2)$ where $\pi'$ is a uniform random permutation of $\frac{1}{2}E_{n}=\{1,2,\ldots ,\lfloor
\frac{n+1}{2}\rfloor\}$. Apply Corollary~\ref{cor:ineqatatom} to $\pi'$ and conclude that the probability of the event in \eqref{eq:piequalsigma} is at most $C/n$.

\smallskip\noindent{\bf Case 2}: Suppose $\la_{k-1}$, $k\in E_{n}$ are all equal but $\la_{k-1}$, $k\in O_{n}$ are not all equal. Then we fix $\pi$ and write
$\sigma(k)=2\sigma'((k-1)/2)+1$ where $\sigma'$ is a uniform random permutation of $\frac{1}{2}(O_{n}-1)=\{0,1,\ldots , \lceil\frac{n-1}{2}\rceil\}$.
Apply Corollary~\ref{cor:ineqatatom} to $\sigma'$ and conclude that the probability of \eqref{eq:piequalsigma} is at most $C/n$.

\smallskip\noindent{\bf Case 3}: Suppose $\la_{k-1}=A$ for all $k\in E_{n}$ and $\la_{k-1}=B$ for all $k\in O_{n}$. If $A=B$, then $P_{n}$ is  a non-zero multiple of $1+t+t^{2}+\ldots
+t^{n}$
(recall that, by assumption, all $\la_k$ do not vanish simultaneously)
and $N(\{-1\},P_{n})\le 1$. Hence, we assume that $A\not=B$.
In this case, let $\tau$ be a uniform random permutation in $\{0,1,\ldots ,n\}$ and let $\la'_{k}=\la_{\tau(k)}$ so that $\la'$ has the same distribution as $\la$. The probability that
$\la'_{k}$ are equal for all $k\in E_{n}$ and equal for all  $k\in O_{n}$ is smaller than $e^{-cn}$ for some $c>0$. Outside this event of negligible probability, $\la'$ will fall into
one of the two cases considered above.

\medskip
Thus, in all cases, $\bP\{N(\{-1\},P_{n})\ge 2\}\le C/n$ and hence $\bE[N(\{-1\},P_{n})]\le C$.

\smallskip
In summary, we have shown that the first two terms on the right hand side of \eqref{eq:foursummands} are bounded by $C\log n$ and that the last two terms are bounded by $C$. Thus,
$\bE[N^{*}(P_{n})]\le C\log n$.
\hfill $\Box$

\medskip\noindent{\bf Remark}: The idea
of employing the sign-changes of the Taylor series of the function
$ (1-x)^{-1}P_n(x) $ was used already in the pioneering paper of Bloch-P\'olya~\cite{BP} and then discussed by Kac~\cite{kacsignalnoise}.
Combining this idea with the classical Kolmogorov-Rogozin inequality for the concentration function (see, for instance,~\cite{Esseen}),
one can get a cruder form of Theorem~\ref{thm:lognbound} with $\sqrt{n}$ in place of $\log n$.

\section{Lemma~\ref{lem:mainanticoncentration} yields Lemma~\ref{lemma3}}

The proof of Lemma~\ref{lemma3} is based on randomization over permutations acting on $( \xi_1,\, \ldots \,, \xi_k )$
combined with estimate~\eqref{eq:ineqatunitscale} in Lemma~\ref{lem:mainanticoncentration}.
Throughout, we say that $\pi\in \Sym_k$ acts on the tuple
$\xi=(\xi_1, \ldots , \xi_k)$ by setting $(\pi \xi)_j=\xi_{\pi(j)}$; we define similarly the action of $\pi$ on functions of $\xi$.
The proof of the first estimate in Lemma~\ref{lemma3}
employs the full permutation group $\Sym_k$ which keeps invariant the joint distribution of the sums
$\sum_{j=1}^k \xi_j$ and $\sum_{j=1}^k j \xi_j$.
This is no longer possible when dealing with the sums
$\sum_{j=1}^k (-1)^j \xi_j$, $ \sum_{j=1}^k (-1)^j j\xi_j$,
and we are forced to use subgroups of the permutation group $\Sym_k$.
Which subgroup to use depends on whether $k$ is odd or even, and we distinguish between these cases in what follows.

The proof of the first estimate~\eqref{eq:1st-sum} in Lemma~\ref{lemma3} is
significantly simpler than that of the second estimate~\eqref{eq:2nd-sum}.
The reader interested only in the case of symmetrically distributed i.i.d.s may skip the proof of
\eqref{eq:2nd-sum}, which is contained in Sections
\ref{sec-3.3} and \ref{sec-3.4}.

\subsection{A corollary to Lemma~\ref{lem:mainanticoncentration}}
We start with a straightforward corollary to
Lemma~\ref{lem:mainanticoncentration}, which
may be interesting on its own.
\begin{lemma}\label{cor:lemtosheppsconjecture}
Let $n\geq 2$ and let $u_{1},\ldots ,u_{n}$ be  real numbers, not all equal to zero. Let $\pi$ be a uniform random permutation in $\Sym_{n}$. Then
$$\bP\Bigl\{\Big|\sum_{i=1}^{n}u_{i}\pi (i) \Big| \le\Big| \sum_{i=1}^{n}u_{i}\Big| \Bigr\}\le\frac{C}{n}.$$
\end{lemma}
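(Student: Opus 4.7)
My strategy is to reduce to Lemma~\ref{lem:mainanticoncentration} via the same normalization used in the proof of Corollary~\ref{cor:ineqatatom}, but now tracking the full width of the interval in which $\sum_i w_i\,\pi(i)$ is constrained to lie, rather than just a single atom.

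I would first dispose of the degenerate case where $u_1=\cdots=u_n$ separately: since they are not all zero, the common value $a$ is nonzero, and then $|\sum_i u_i\pi(i)| = |a|n(n+1)/2$ while $|\sum_i u_i|=n|a|$, so the desired event fails for every $\pi$ when $n\ge 2$ and the probability is in fact zero.

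Otherwise the $u_i$ are not all equal, and I would set $\bar u = n^{-1}\sum_i u_i$, $\sigma^2=\sum_i(u_i-\bar u)^2>0$, and $w_i=(u_i-\bar u)/\sigma$, so that $\sum w_i=0$ and $\sum w_i^2=1$. A direct computation, using $\sum_i\pi(i) = n(n+1)/2$, rewrites the event $|\sum_i u_i\pi(i)|\le|\sum_i u_i|$ as
\[
\Bigl|\sum_i w_i\pi(i)-Ln\Bigr|\le r,\qquad Ln:=-\tfrac{\bar u\, n(n+1)}{2\sigma},\quad r:=\tfrac{n|\bar u|}{\sigma}.
\]
The key observation will be that $|Ln|/r = (n+1)/2\ge 3/2$ for $n\ge 2$, so the interval $[Ln-r,Ln+r]$ stays a definite distance from $0$: every $y$ in it satisfies $|y|/n \ge |L|(n-1)/(n+1)\ge |L|/3$.

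To finish, I would cover this interval by $O(r)$ unit-length windows of the form $\{y:|y-(Ln+k)|\le 1\}$, $k\in\bZ$, apply estimate~\eqref{eq:ineqatunitscale} to each window (getting at most $(C/n)e^{-c|L|/3}$ per window by the preceding lower bound on $|y|/n$), and sum. Since $r = 2n|L|/(n+1)\le 2|L|$ for $n\ge 2$, the number of windows is $O(|L|+1)$, and the total is
\[
\bP(\cdot)\le \frac{C(|L|+1)}{n}\,e^{-c|L|/3}\le \frac{C'}{n},
\]
because $(|L|+1)e^{-c|L|/3}$ is uniformly bounded. There is no serious obstacle; the only delicate point is that the exponential decay in $L$ from Lemma~\ref{lem:mainanticoncentration} must compensate for the window count, and it does precisely because in our situation $r$ and $|L|$ are proportional.
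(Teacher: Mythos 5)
Your proposal is correct and follows essentially the same route as the paper: dispose of the all-equal case trivially, normalize to $w_i=(u_i-\bar u)/\sigma$, rewrite the event as constraining $\sum_i w_i\pi(i)$ to an interval of length $\asymp n|\bar u|/\sigma$ centered at $\asymp n^2\bar u/\sigma$, cover by $O(|L|+1)$ unit-scale windows, apply Lemma~\ref{lem:mainanticoncentration} to each, and absorb the window count into the exponential decay. The paper's version is a touch more terse (it WLOG takes $\bar u\ge 0$, covers by intervals of length $2$, and notes $\lceil x\rceil e^{-cx}$ is bounded), but the argument is the same.
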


\noindent{\em Proof of Lemma~\ref{cor:lemtosheppsconjecture}}:
If $u_{i}$s are all equal (and hence non-zero), then the probability in the statement  is zero and
there is nothing to prove. Otherwise, write $u_{i}=\bar{u}+\sigma w_{i}$, where $\bar{u}$ is the mean
of $u_{1},\ldots ,u_{n}$,  and
$\sigma^2=\sum_{i=1}^n (u_i-\bar{u})^2$.
Then, $w_{1},\ldots ,w_{n}$ satisfy
the hypotheses of Lemma~\ref{lem:mainanticoncentration}. Assume without loss of generality that $\bar{u}\geq 0$.
We want to get a bound on
\begin{align*}
\bP\Bigl\{\Big|\sum_{i=1}^{n}u_{i}\pi (i) \Big| \le\Big| \sum_{i=1}^{n}u_{i}\Big| \Bigr\} &=
\bP\Bigl\{ \Big|\sum_{i=1}^{n}w_{i}\pi (i)  + \frac{n(n+1)}{2}\frac{\bar{u}}{\sigma}\Big|\le n\frac{\bar{u}}{\sigma}   \Bigr\} \\
&=\bP\Bigl\{ \sum_{i=1}^{n}w_{i}\pi (i) \in \Bigl[ \frac{n(n-1)}{2}\frac{\bar{u}}{\sigma},\frac{n(n+3)}{2}\frac{\bar{u}}{\sigma}\Bigr] \Bigr\}.
\end{align*}
We cover the interval
$$\left[ \frac{n(n-1)}{2}\,\frac{\bar{u}}{\sigma}, \frac{n(n+3)}{2}\,\frac{\bar{u}}{\sigma}\right]$$
by $\lceil \frac{n\bar u}{\sigma} \rceil$ intervals of length $2$, and apply \eqref{eq:ineqatunitscale} to each subinterval
(the value of $L$ is different for different intervals, but, in any case, $ |L|\geq c\,\tfrac{n\bar u}{\sigma}$).
We get
\begin{align}\label{eq:cor6ubd}
\bP\Bigl\{\Big| \sum_{i=1}^{n}u_{i}\pi (i) \Big| \le\Big| \sum_{i=1}^{n}u_{i}\Big| \Bigr\} &\le\frac{C}{n}\,e^{-c\frac{n\bar{u}}{\sigma}}\,\left\lceil
\frac{n\bar{u}}{\sigma} \right\rceil.
\end{align}
Note that $\lceil x\rceil e^{-cx}$ is bounded by $\frac{2}{c}\vee 1$ on $[0,\infty)$
(for $x\le1$ the bound is $1$ while for $x>1$ we bound $\lceil x\rceil$ by $2x$ and use that $\max\limits_{t>0} \ te^{-t}\le1$). Thus
\[
\frac{C}{n}\, e^{-c\frac{n\bar{u}}{\sigma}}\, \left\lceil \frac{n\bar{u}}{\sigma} \right\rceil \le\frac{C}{n}\left(\frac{2}{c}\vee 1\right) \le\frac{C'}n\,.
\]
This completes the proof of Lemma~\ref{cor:lemtosheppsconjecture}. \hfill $\Box$

\subsection{The first estimate in Lemma~\ref{lemma3}}
Here, we use Lemma~\ref{cor:lemtosheppsconjecture} to deduce \eqref{eq:1st-sum}.
Let $A$ be the set $\{ \xi_1, \ldots , \xi_k\}$ (note that $\xi_{k}$ are distinct and non-zero with probability $1$).
Conditional on $A = \{u_1, \ldots , u_k\}$, the
tuple $( \xi_1, \ldots , \xi_k )$ has the same distribution as $( u_{\pi (1)}, \ldots , u_{\pi(k)} )$, where $\pi$ is
uniformly distributed in $\Sym_{k}$.
Lemma~\ref{cor:lemtosheppsconjecture}
applies (as $\sum_{i} u_{\pi(i)} i$ has the same distribution as $\sum_{i}u_{i}\pi (i)$) to show that
$$
\bP\Bigl\{ \Bigl| \sum_{j=1}^k j\xi_j \Bigr| \le\Bigl| \sum_{j=1}^k \xi_j \Bigr|\;
\Big| \; A=\{u_{1},\ldots ,u_{k}\}\Bigr\} \le\frac{C}{k}\,.
$$
Since this holds for every realization of $A$,  we get
\[
\bP \Bigl\{ \Bigl| \sum_{j=1}^k j\xi_j \Bigr| \le\Bigl| \sum_{j=1}^k \xi_j \Bigr| \Bigr\}
\le\frac{C}{k},
\]
completing the proof. \hfill $\Box$

\subsection{The second estimate in Lemma~\ref{lemma3}, the odd case}
\label{sec-3.3}

Let $k=2m-1$ and define
\[
S_{\tt e}=\sum_{j=1}^{m-1}\xi_{2j}, \quad S_{\tt o}=\sum_{j=1}^{m}\xi_{2j-1}, \quad S=S_{\tt e}-S_{\tt o}\,.
\]
Set
$ \xi_{2j}'=\xi_{2j}-\tfrac1{m-1}S_{\tt e}$, $\xi_{2j-1}'=\xi_{2j-1}-\tfrac1m S_{\tt o}$,
\[
T_{\tt e} = \sum_{j=1}^{m-1} j\xi_{2j}', \quad T_{\tt o}=\sum_{j=1}^m j \xi_{2j-1}',
\quad  T=T_{\tt e}-T_{\tt o}\,.
\]
Then
\[
\sum_{j=1}^{2m-1} (-1)^j \xi_j = S_{\tt e} - S_{\tt o} = S\,,
\]
and
\begin{align*}
\sum_{j=1}^{2m-1} (-1)^j j \xi_j 
&= 2 \sum_{j=1}^{m-1} j \xi_{2j} - 2 \sum_{j=1}^m j \xi_{2j-1} + \sum_{j=1}^m \xi_{2j-1} \\
&= 2 \sum_{j=1}^{m-1} j \xi'_{2j} + 2\, \frac1{m-1}\, S_{\tt e} \cdot \frac{m(m-1)}2
- 2 \sum_{j=1}^m j \xi'_{2j-1} - 2\, \frac1{m}\, S_{\tt o} \cdot \frac{m(m+1)}2 + S_{\tt o} \\
&= 2T_{\tt e} - 2T_{\tt o} + mS_{\tt e} - m S_{\tt o}   \\
&= 2 \Bigl[ T + \frac{m}2\, S \Bigr]\,.
\end{align*}
Thus, we aim at proving the existence of a numerical constant $C$ so that
	\begin{equation}
		\label{eq-121014b}
		\bP\Bigl\{|T+\frac{m}{2}\,S|\le\frac12\, |S|\Bigr\}
		\le\frac{C}{m}.
	\end{equation}

\subsubsection{The subgroups $\Sym_{k}^{\tt e}$ and $\Sym_{k}^{\tt o}$ of $\Sym_{k}$}
	
	Let $\Sym_{k}^{\tt e}$ denote the subgroup of $\Sym_k$ that includes
	those permutations that involve
	only the even indices $\{2j\}_{j=1}^{m-1}$. Similarly,
	let $\Sym_{k}^{\tt o}$ denote the subgroup of $\Sym_k$ that includes
	those permutations that involve
	only the odd indices $\{2j-1\}_{j=1}^{m}$. Let
	$\widehat \Sym_k$ denote the subgroup of $\Sym_k$ consisting
	of permutations $\pi=\pi_{\tt e}\circ\pi_{\tt o}$ where $\pi_{\tt e}\in \Sym_k^{\tt e}$ and
	$\pi_{\tt o}\in \Sym_k^{\tt o}$ ($\widehat \Sym_k$ is a subgroup because the subgroups $\Sym_{k}^{\tt e}$ and
$\Sym_{k}^{\tt o}$ commute). Note that $S_{\tt e}$ and $S_{\tt o}$, and hence $S$, are
	invariant under the action of any $\pi\in \widehat\Sym_k$ on $\xi$;
	in particular, $\pi=\pi_{\tt e}\circ \pi_{\tt o}\in \widehat \Sym_k$ acts on
	$\xi'=\{\xi_{j}'\}_{j=1}^k$ again as a permutation.
    Instead of considering $\pi$ drawn uniformly from $\Sym_k$, we
	consider $\pi_{\tt e}$ and $\pi_{\tt o}$ drawn uniformly from
	$\Sym_k^{\tt e}$ and $\Sym_k^{\tt o}$, respectively, and take
	$\pi=\pi_{\tt e}\circ \pi_{\tt o}$, writing
	$T_{\tt o}^\pi=\pi_{\tt o} T_{\tt o}$, $T_{\tt e}^\pi=\pi_{\tt e} T_{\tt e}$ and
	$T^\pi=T_{\tt e}^\pi-T_{\tt o}^\pi$. Proving \eqref{eq-121014b}
	then reduces to proving that for any fixed tuple $\xi$ with distinct entries,
	\begin{equation}
		\label{eq-121014c}
		\bP^\xi\Bigl\{|T^\pi+\frac{m}{2}\,S|\le\frac12\, |S|\Bigr\}
		\le\frac{C}{m},
	\end{equation}
	where $\bP^\xi$ denotes averaging with respect to
	the product of uniform
	measures on
	$\Sym_k^{\tt e}$ and $\Sym_k^{\tt o}$, with $\xi$ fixed\footnote{
  Here, as well as in the case $k=2m$, we use the following observation. Let $f\colon \bR^k \to \bR$ be a Borel
  function (in the current instance, $f=\frac12 |S| -|T+\frac{m}2S|$) and let $\xi=(\xi_1, \ldots , \xi_k)$ be a vector of
  exchangeable random variables. Then, for any subset $\widetilde\Sym\subset\Sym_k$ and any probability distribution
  $\widetilde\bP$ on $\widetilde\Sym$, one has
  \[
  \bP\bigl\{ f(\xi) \ge 0 \bigr\} \le \sup_{x_1, \ldots , x_k}\,\widetilde\bP \bigl\{ \sigma\in\widetilde\Sym\colon f(x_{\sigma_1}, \ldots , x_{\sigma_k}) \ge 0\bigr\}\,.
  \]
  }.
Henceforth, we assume that $S\not=0$, which happens almost surely because of the assumption that $(\xi_{1},\ldots,\xi_{k})$ has an absolutely continuous  distribution.

\subsubsection{Proof of estimate~\eqref{eq:2nd-sum} in the case $k=2m-1$}
	Introduce the partition of $\bR$ determined by
	$$ I_j=[ (j-1/2)S, (j+1/2)S), \quad j\in \bZ\,.$$
		We have then
		\begin{eqnarray*}
\bP^\xi\Bigl\{ \bigl| T^\pi+\frac{m}{2}S \bigr|\le\frac12\, |S| \Bigr\} &= &
\bP^\xi \Bigl\{ \bigl| T_{\tt e}^\pi - T_{\tt o}^\pi + \frac{m}{2}S \bigr| \le\frac12\, |S| \Bigr\} \\ \\
&\le&
\sum_{\stackrel{j_{\tt e},j_{\tt o}\in \bZ}{|j_{\tt e}-j_{\tt o}+\frac{m}{2}|\le4}}
\bP^\xi\bigl\{ T_{\tt e}^\pi\in I_{j_{\tt e}},
T_{\tt o}^\pi\in I_{j_{\tt o}} \bigr\} \\
&= &
\sum_{\stackrel{j_{\tt e},j_{\tt o}\in \bZ}{|j_{\tt e}-j_{\tt o}+\frac{m}{2}|\le4}}
\bP^\xi\bigl\{T_{\tt e}^\pi\in I_{j_{\tt e}}\bigr\}\,
\bP^\xi\bigl\{T_{\tt o}^\pi\in I_{j_{\tt o}}\bigr\} \,.
\end{eqnarray*}
Note that in the range of summation of the last expression we have that
$\max(|j_{\tt o}|,|j_{\tt e}|)\geq m/5$ if $m>40$. Assuming this is the case, and using
that
$$\sum_{j_{\tt e}} \bP^\xi\bigl\{T_{\tt e}^\pi\in I_{j_{\tt e}}\bigr\}=
\sum_{j_{\tt o}} \bP^\xi\bigl\{T_{\tt o}^\pi\in I_{j_{\tt o}}\bigr\}=1,$$ we obtain that
\begin{equation}
	\label{eq-121014d}
	\bP^\xi\Bigl\{|T^\pi+\frac{m}{2}S|\le\frac12\, |S| \Bigr\}\le\sup_{|j|\geq m/5}
\bP^\xi\bigl\{ T_{\tt e}^\pi\in I_{j} \bigr\}+
\sup_{|j|\geq m/5}
\bP^\xi \bigl\{ T_{\tt o}^\pi\in I_{j}\bigr\}\,.
\end{equation}
We now apply estimate~\eqref{eq:ineqatunitscale} in Lemma~\ref{lem:mainanticoncentration}.
The argument is the same for either $T_{\tt e}$ or $T_{\tt o}$, so for concreteness set
\[
\beta^2 =  \sum_{j=1}^{m-1} ( \xi'_{2j} )^2
\]
and consider the term in the right-hand side of~\eqref{eq-121014d} involving
$T_{\tt e}$. Set $w_j=\xi'_{2j}/\beta$ so that $\sum_j w_j =0$ and $\sum_j w_j^2 = 1$.
Then, for any $j$,
\[
\bP^\xi\bigl\{ T_{\tt e}^\pi \in I_j \bigr\} \le
\bP\Bigl\{ \Bigl| \sum_{j=1}^{m-1} w_j \pi_{\tt e}(j) - \frac{jS}{\beta} \Bigr|  \le \frac{|S|}{\beta} \Bigr\}\,.
\]
By Lemma~\ref{lem:mainanticoncentration}, the RHS does not exceed
$ \tfrac{C}{m}\, e^{-\frac{cj|S|}{\beta m}} \, \bigl\lceil \tfrac{|S|}{\beta} \bigr\rceil$.
Since we are interested only in $j$s with $|j|\ge m/5$, the latter expression is bounded
by $ \tfrac{C}{m}\, e^{-\frac{c|S|}{\beta}} \, \bigl\lceil \tfrac{|S|}{\beta} \bigr\rceil$.
Since $\lceil x \rceil e^{-cx}$ is bounded above, we get the bound $C/m$.

			The same argument applies with $T_{\tt o}^\pi$
			replacing $T_{\tt e}^\pi$. Substituting in
	\eqref{eq-121014d} completes the proof of the lemma when $k$ is odd.
	\qed

\subsection{The second estimate in Lemma~\ref{lemma3}, the even case}
\label{sec-3.4}

Let $k=2m$. We need to show that
\[
\bP\Bigl\{ \Bigl| \sum_{j=1}^{2m} (-1)^j j \xi_j \Bigr| \le \Bigl| \sum_{j=1}^{2m} (-1)^j \xi_j \Bigr|  \Bigr\} \le \frac{C}m\,.
\]
Put
\[
S=\sum_{j=1}^{2m} (-1)^j \xi_j = \sum_{j=1}^m (\xi_{2j} - \xi_{2j-1})
\]
and set  $\eta_j=\xi_{2j}-\xi_{2j-1}$, noting that
	$S=\sum_{j=1}^m \eta_j$. As in the odd case, we center $\eta_j$ by
	introducing $\eta_j'=\eta_j-\frac1{m}S$. Then
\begin{align*}
\sum_{j=1}^{2m} (-1)^j j \xi_j &= \sum_{j=1}^m (2j)\xi_{2j} - \sum_{j=1}^m (2j-1)\xi_{2j-1}
= 2\sum_{j=1}^m j\eta_j + \sum_{j=1}^m \xi_{2j-1} \\
&= 2\sum_{j=1}^m j\eta_j' + (m+1)S + \sum_{j=1}^m \xi_{2j-1}
= 2\sum_{j=1}^m j\eta_j' + m\Lambda
\end{align*}
with
\begin{equation}\label{eq:Lambda}
\Lambda = \Bigl( 1+ \frac1m \Bigr) S +\frac1{m} \sum_{j=1}^m \xi_{2j-1}\,.
\end{equation}
Thus, we need to estimate
\[
\bP\Bigl\{ \Bigl| 2\sum_{j=1}^m j \eta_j' +m\Lambda \Bigr| \le \bigl| S \bigr| \Bigr\}\,.
\]

\subsubsection{A randomization over local and global permutations}

We introduce two subgroups of $\Sym_{k}$. The first, which we
refer to as \textit{local} permutations,
swaps the entries of the pairs $(\xi_{2j-1}, \xi_{2j})$. This subgroup is
generated by the transpositions
$\tau_j$, $j=1,\ldots,m$, which map
$(2j-1,2j)\to (2j,2j-1)$. Writing $\Slk$ for the subgroup of local
permutations, we note that $|\Slk|=2^m$.

The second subgroup of $\Sym_k$ that we employ, which we refer
to as \textit{global} permutations, swaps the whole pairs. This subgroup
is generated by the
permutations $\theta_{jj'}$, $1\le j<j'\le m$, which map
\[ (1,2,\ldots,2j-1,2j,\ldots,2j'-1,2j',\ldots,k-1,k)\mapsto
(1,2,\ldots,2j'-1,2j',\ldots,2j-1,2j,\ldots,k-1,k)\,.\]
For instance, if we originally had the tuple
$(\xi_1, \xi_2, \xi_3, \xi_4, \xi_5, \xi_6)$, we can get something
like $(\xi_2, \xi_1, \xi_3, \xi_4, \xi_6, \xi_5)$ after some local
permutation, and then $(\xi_3, \xi_4, \xi_6, \xi_5, \xi_2, \xi_1)$ after
a global permutation. Writing $\Sgk$ for the subgroup of global permutations, we note that
$|\Sgk|=m!$.
We  will consider
in what follows permutations $\pi$ from $\Sym_k$ that decompose as
$\pi=\pi^{\tt g}\circ \pi^{\tt l}$ with $\pi^{\tt l}\in \Slk$ and $\pi^{\tt g}\in \Sgk$
and randomize over $\pi^{\tt g}$ and $\pi^{\tt l}$ with $\pi^{\tt g}$ and $\pi^{\tt l}$ independent
and uniformly distributed over $\Sgk$ and $\Slk$. Note that
unlike odd and even permutations considered in case $k$ is odd,
local and global permutations do not in general commute.
We note that
\begin{itemize}
\item
The quantities $S$ and $\Lambda$ are invariant under the action of $\Sgk$.
In what follows, the explicit form~\eqref{eq:Lambda} of $\Lambda$ will be irrelevant, only the
$\Sgk$-invariance will matter.
\item
Random choice of $\pi^{\tt l}$ is equivalent to placing independent random signs in from of $\eta_1$,
\ldots , $\eta_m$.
\end{itemize}

We fix $\xi$ (and hence, $\eta$), and denote by $\bP^\xi$ the law
of $(\pi^{\tt g}\circ\pi^{\tt l}) \xi$ conditioned on $\xi$. We need to
show that
\[
\bP^\xi\Bigl\{ \Bigl| 2\sum_{j=1}^m \pi^{\tt g}(j)
(\pi^{\tt l}\eta)'_j + m\Lambda_{\pi^{\tt l}}\Bigr|
\le \bigl| S_{\pi^{\tt l}} \bigl|\Bigr\} \le \frac{C}m\,.
\]
Here, we put $S_{\pi^{\tt l}} = \pi^{\tt l}S$, $\Lambda_{\pi^{\tt l}} = \pi^{\tt l}\Lambda$, and, abusing notation,
we denote by $\pi^{\tt g}(j)$ the image of $j\!\in\!\{1, 2,\, \ldots \, , m\}$ under $\pi^{\tt g}$ viewed as a permutation on $m$ letters.

\subsubsection{Good and bad local permutations}

Put
\[
\beta_{\pi^{\tt l}}^2 = \sum_{j=1}^m \bigl( (\pi^{\tt l}\eta)'_j \bigr)^2\,, \quad L_{\pi^{\tt l}} =
\frac{\Lambda_{\pi^{\tt l}}}{2\beta_{\pi^{\tt l}}}\,.
\]
We need to estimate
\[
\bP^\xi\Bigl\{ \Bigl| \frac1{\beta_{\pi^{\tt l}}} \sum_{j=1}^m \pi^{\tt g}(j)
(\pi^{\tt l}\eta)'_j + m L_{\pi^{\tt l}}\Bigr|
\le \frac{|S_{\pi^{\tt l}}|}{2\beta_{\pi^{\tt l}}}\Bigr\}
\]
We wish to apply about $|S_{\pi^{\tt l}}|/\beta_{\pi^{\tt l}}$ times Lemma~\ref{lem:mainanticoncentration} with $w_j = (\pi^{\tt l}\eta)'_j/\beta_{\pi^{\tt l}}$
(this is a point where we use the randomization over $\Sgk$).
An obstacle is that, for some $\pi^{\tt l}\in\Slk$, the quantity $\beta_{\pi^{\tt l}}$ can be much smaller than $|S_{\pi^{\tt l}}|$.
The randomization over $\Slk$ will help us to circumvent this obstacle.

Put
\[
B^2 = \sum_{j=1}^m \eta_j^2\,,
\]
and note that this quantity is both $\Sgk$- and $\Slk$-invariant. The next two claims show that outside of a tiny
part of all local permutations, $ \beta_{\pi^{\tt l}} $ is comparable with $B$.

Given a tuple $\eta=(\eta_1, \ldots , \eta_m)$
	we call a permutation $\pi^{\tt l}\in \Slk$  \textit{good}
	if
	\[ \frac{1}{4} m \le|\{j: (\pi^{\tt l} \eta)_j>0\}| \le \frac34 m \] and
	\textit{bad} otherwise. Let $\Slkgd$ denote the subset of
	$\Slk$ consisting of good permutations.
	\begin{claim}
		\label{lem-Bbeta}
		For all $\pi^{\tt l}\in \Slkgd$, it holds that $\frac15 B^2\le \beta_{\pi^{\tt l}}^2 \le B^2$.
	\end{claim}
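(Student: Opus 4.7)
The key observation is that a local permutation $\pi^{\tt l}\in\Slk$ acts on the tuple $\eta=(\eta_1,\ldots,\eta_m)$ simply by flipping signs: if $\pi^{\tt l}$ corresponds to a sign vector $\epsilon=(\epsilon_1,\ldots,\epsilon_m)\in\{\pm 1\}^m$ (with $\epsilon_j=-1$ precisely when the transposition $\tau_j$ is part of $\pi^{\tt l}$), then $(\pi^{\tt l}\eta)_j=\epsilon_j\eta_j$. In particular $\sum_j(\pi^{\tt l}\eta)_j^2=B^2$ for every $\pi^{\tt l}$, and the standard variance identity yields
\[
\beta_{\pi^{\tt l}}^2 \;=\; \sum_{j=1}^m(\pi^{\tt l}\eta)_j^2 - \frac{1}{m}\Bigl(\sum_{j=1}^m(\pi^{\tt l}\eta)_j\Bigr)^2 \;=\; B^2 - \frac{S_{\pi^{\tt l}}^2}{m}.
\]
The upper bound $\beta_{\pi^{\tt l}}^2\le B^2$ is then immediate, so the whole content of the claim lies in showing $S_{\pi^{\tt l}}^2\le \tfrac{4m}{5}B^2$ whenever $\pi^{\tt l}$ is good.

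To prove this, I would partition $\{1,\ldots,m\}$ into $A^+=\{j:\epsilon_j\eta_j>0\}$ and $A^-=\{j:\epsilon_j\eta_j<0\}$; since $\eta_j\ne 0$ almost surely, these sets form a partition with $p:=|A^+|$ and $q:=m-p$ both belonging to $[m/4,3m/4]$ by goodness. Setting $S^{\pm}=\sum_{j\in A^\pm}|\eta_j|$ and $B_\pm^2=\sum_{j\in A^\pm}\eta_j^2$, one has $S_{\pi^{\tt l}}=S^+-S^-$, which expresses $S_{\pi^{\tt l}}$ as the difference of two \emph{non-negative} quantities. Hence $|S_{\pi^{\tt l}}|\le\max(S^+,S^-)$, and Cauchy--Schwarz on each half gives $(S^+)^2\le pB_+^2$ and $(S^-)^2\le qB_-^2$. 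Since $p,q\le 3m/4$ and $B_\pm^2\le B^2$, both quantities are at most $\tfrac{3m}{4}B^2$, whence $S_{\pi^{\tt l}}^2\le\tfrac{3m}{4}B^2\le\tfrac{4m}{5}B^2$, giving $\beta_{\pi^{\tt l}}^2\ge B^2/4\ge B^2/5$.

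I do not anticipate any real obstacle here. The one conceptually important point is that the naive Cauchy--Schwarz bound $S_{\pi^{\tt l}}^2=(\sum_j\epsilon_j\eta_j)^2\le mB^2$ is exactly too weak; what saves the day is the cancellation in $S_{\pi^{\tt l}}=S^+-S^-$, and the goodness condition is precisely what forces both $p$ and $q$ to be bounded away from $m$ so that this cancellation produces a genuine improvement.
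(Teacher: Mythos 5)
Your proof is correct and yields the (slightly stronger) bound $\beta_{\pi^{\tt l}}^2 \ge \tfrac14 B^2$. It shares the paper's starting point — the identity $\beta_{\pi^{\tt l}}^2 = B^2 - \tfrac1m S_{\pi^{\tt l}}^2$, which gives the upper bound for free — but diverges on the lower bound. The paper argues directly on the variance: writing $\beta_{\pi^{\tt l}}^2 = \sum_j \bigl((\pi^{\tt l}\eta)_j - \tfrac1m S_{\pi^{\tt l}}\bigr)^2$ and retaining only the indices $j$ where $(\pi^{\tt l}\eta)_j$ has sign opposite to $S_{\pi^{\tt l}}$ (at least $m/4$ of them by goodness), it obtains $\beta_{\pi^{\tt l}}^2 \ge \tfrac1{4m}S_{\pi^{\tt l}}^2 = \tfrac14(B^2 - \beta_{\pi^{\tt l}}^2)$ and solves the self-referential inequality to get $\beta_{\pi^{\tt l}}^2 \ge \tfrac15 B^2$. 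You instead upper-bound $S_{\pi^{\tt l}}^2$ directly: splitting into $S^+ - S^-$ with $S^\pm \ge 0$, using $|S^+ - S^-| \le \max(S^+, S^-)$, and applying Cauchy--Schwarz with the goodness-forced bound $|A^\pm| \le \tfrac34 m$ to get $S_{\pi^{\tt l}}^2 \le \tfrac{3m}{4}B^2$. The two routes are comparably short; yours avoids the self-referential step and tightens the constant $\tfrac15$ to $\tfrac14$, but both exploit the same leverage, namely that goodness prevents either sign-class from carrying the entire $\ell^2$-mass. The a.s.\ non-vanishing of $\eta_j$ (needed so $A^+ \cup A^-$ is all of $\{1,\dots,m\}$) follows from absolute continuity of $\xi$, as you note; the paper uses it in the same tacit way.
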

\bprf[Proof of Claim~\ref{lem-Bbeta}]
Since
\[
\beta_{\pi^{\tt l}}^2=\sum_{j=1}^m {(\pi^{\tt l}\eta)'_j}^2=
\sum_{j=1}^m \bigl( (\pi^{\tt l}\eta)_j - \frac1m S_{\pi^{\tt l}} \bigr)^2 = \sum_{j=1}^m \eta_j^2
- \frac1m S_{\pi^{\tt l}}^2 =
B^2 - \frac1m S_{\pi^{\tt l}} ^2\,,
\]
the upper bound $\beta_{\pi^{\tt l}}^2\le B^2$ is immediate.
It remains to prove the claimed lower bound. Note however that
\[
	\beta_{\pi^{\tt l}}^2=
\sum_{j=1}^m \bigl( (\pi^{\tt l}\eta)_j - \frac1{m}S_{\pi^{\tt l}} \bigr)^2
\geq \sum_{j\colon \sgn ((\pi^{\tt l}\eta)_j)=-\sgn (S_{\pi^{\tt l}})}\bigl( \frac1{m}S_{\pi^{\tt l}} \bigr)^2\,.
\]
Since $\pi^{\tt l}\in \Slkgd$, the sum on the right-side
is over at least $\frac14 m$ terms,
and thus we obtain that
	$$\beta_{\pi^{\tt l}}^2\geq \frac14\, m\,\bigl( \frac1{m}S_{\pi^{\tt l}} \bigr)^2 =
	\frac14\, (B^2-\beta_{\pi^{\tt l}}^2)\,.$$
	The claim follows.
	\eprf

Thus, for good local permutations $\pi^{\tt l}$, the parameter $ \beta_{\pi^{\tt l}}^2 $
is controlled by $ B^2 $. The next claim asserts that most of local permutations are good:
	
	\begin{claim}
		\label{lem-good}
		Let $(\xi_1,\ldots,\xi_{2m})$ be an exchangeable random vector having a distribution that is absolutely continuous to Lebesgue measure on $\bR^{k}$.
        Let $\eta_j = \xi_{2j}-\xi_{2j-1}$, $1\le j \le m$. Then
		$$\bP\bigl\{ \pi^{\tt l} \not\in \Slkgd \bigr\} \le e^{-c m}\,.$$
	\end{claim}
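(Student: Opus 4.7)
The plan is to identify the action of $\pi^{\tt l}$ on the tuple $\eta$ as an i.i.d.\ uniform sign flip, and then invoke the standard Chernoff--Hoeffding tail bound for fair coin flips.

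First, I would observe that a generating transposition $\tau_j\in\Slk$ swaps $\xi_{2j-1}$ with $\xi_{2j}$, and hence sends $\eta_j=\xi_{2j}-\xi_{2j-1}$ to $-\eta_j$ while fixing every $\eta_{j'}$ with $j'\neq j$. Writing a uniform random $\pi^{\tt l}\in\Slk$ as $\tau_1^{b_1}\cdots\tau_m^{b_m}$ with i.i.d.\ Bernoulli$(1/2)$ bits $b_j$, it follows that
\[
(\pi^{\tt l}\eta)_j \;=\; \epsilon_j\,\eta_j,\qquad j=1,\dots,m,
\]
where $\epsilon_1,\dots,\epsilon_m$ are i.i.d.\ uniform on $\{\pm 1\}$ and independent of $\xi$.

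Next, I would condition on $\xi$. By the absolute continuity hypothesis, $\eta_j\neq 0$ for every $j$ almost surely, so $\sgn(\eta_j)$ is well defined. Since $\epsilon_j$ is uniform on $\{\pm 1\}$ and independent of $\eta_j$, the random sign $\sgn\bigl((\pi^{\tt l}\eta)_j\bigr)=\epsilon_j\,\sgn(\eta_j)$ is itself uniform on $\{\pm 1\}$, and these signs are independent across $j$. Consequently, conditional on $\xi$, the counting variable
\[
N \;:=\; \bigl|\{\,j\colon (\pi^{\tt l}\eta)_j>0\,\}\bigr|
\]
follows the $\mathrm{Binomial}(m,1/2)$ distribution.

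Finally, Hoeffding's inequality yields
\[
\bP\bigl\{\pi^{\tt l}\notin\Slkgd\bigr\} \;=\; \bP\bigl\{\,|N-m/2|>m/4\,\bigr\} \;\le\; 2\,e^{-m/8},
\]
which implies the claim after adjusting the constant $c$. No real obstacle is expected here: the whole content of the argument is the observation that randomizing over $\Slk$ is equivalent to placing i.i.d.\ uniform $\pm 1$ signs in front of the differences $\eta_j$, after which the bound reduces to the textbook concentration estimate for the binomial distribution around its mean.
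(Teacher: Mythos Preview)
Your proposal is correct and follows essentially the same approach as the paper: both arguments observe that randomizing over $\Slk$ places i.i.d.\ uniform $\pm 1$ signs in front of the $\eta_j$, so that the count $N$ is $\mathrm{Binomial}(m,1/2)$, and then apply the Bernstein--Hoeffding inequality. Your write-up is in fact slightly more explicit than the paper's, spelling out the decomposition $\pi^{\tt l}=\tau_1^{b_1}\cdots\tau_m^{b_m}$ and the use of absolute continuity to guarantee $\eta_j\neq 0$.
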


\bprf[Proof of Claim~\ref{lem-good}]
	Let $N^{\pi^{\tt l}}_\eta=|\{j: (\pi^{\tt l}\eta)_j>0\}|$.
	Note that
	under $\pi^{\tt l}$,
	the signs $\{\sgn(\pi^{\tt l} \eta)_j\}_{1\le j\le m}$
	are i.i.d. zero mean Bernoulli random variables taking the values
	$\{-1,1\}$.
	Letting $\{\eps_i\}_{i\geq 1}$ denote i.i.d random variables
	taking the values $\{0,1\}$ with equal probability, and $ \bP^\eta $ denote
    expectation with respect to $\pi^{\tt l}$,
    we have that
	$$\bP^\eta \Bigl\{ N^{\pi^{\tt l}}_\eta\not\in \Bigr[\tfrac14 m,\tfrac34 m\Bigr] \Bigr\}
	\le	2\bP \Bigl\{ \sum_{1 \le i \le m} \eps_i<\tfrac14 m \Bigr\}
	\le e^{-c m}\,,$$
	where the last inequality follows the classical Bernstein-Hoeffding inequality (which we will recall in Section~\ref{subsubsect:BH}
below). This proves the claim.
\eprf

\subsubsection{Proof of estimate~\eqref{eq:2nd-sum} in the case $k=2m$}

Given a good local permutation $\pi^{\tt l}$, we have
\[
\frac{|S_{\pi^{\tt l}}|}{2\beta_{\pi^{\tt l}}} \le \frac52\, \frac{|S_{\pi^{\tt l}}|}{B}\,,
\]
whence, by Lemma~\ref{lem:mainanticoncentration} applied at most
$ C|S_{\pi^{\tt l}}|/B+1$ times with $w_j=\pi^{\tt l}_j/\beta_{\pi^{\tt l}}$,
\[
\bP^{\xi, \pi^{\tt l}}\Bigl\{ \Bigl| \frac1{\beta_{\pi^{\tt l}}} \sum_{j=1}^m \pi^{\tt g}(j)
(\pi^{\tt l}\eta)'_j + m L_{\pi^{\tt l}}\Bigr|
\le \frac{|S_{\pi^{\tt l}}|}{2\beta_{\pi^{\tt l}}}\Bigr\}
\le \frac{C}m\, \Bigl( \frac{|S_{\pi^{\tt l}}|}{B} + 1 \Bigr)\,,
\]
where $ \bP^{\xi, \pi^{\tt l}} $ denotes expectation with respect to $\pi^{\tt g}$.

At last, recall that, given $\eta$, $S_{\pi^{\tt l}}=\sum_{j=1}^m (\pi^{\tt l}\eta)_j$ has the same distribution as $\sum_{j=1}^m \varepsilon_j \eta_j$
where $\varepsilon_j$ are i.i.d. Bernoulli random variables taking
	the values $\{-1,1\}$ with equal probability.
Then, denoting by $\bP^\eta$ the expectation with respect to $\eps_j$s, recalling that $B^2=\sum_{j=1}^m \eta_k^2$,
and using the subgaussian property of Bernoulli sums
(a.k.a. the Bernstein-Chernoff inequality), we have
\[
\bP^\xi \Bigl\{ \frac{|S_{\pi^{\tt l}}|}{B} \ge t \Bigr\} = \bP^\eta \Bigl\{ \Bigl| \sum_{j=1}^m \eps_k \eta_k \Bigr|
		\geq tB  \Bigr\} \le C e^{-c\,t^2}\,, \qquad t>0\,,
\]
whence
\begin{multline*}
\bP^{\xi}\Bigl\{ \Bigl| \frac1{\beta_{\pi^{\tt l}}} \sum_{j=1}^m \pi^{\tt g}(j)
(\pi^{\tt l}\eta)'_j + m L_{\pi^{\tt l}}\Bigr|
\le \frac{|S_{\pi^{\tt l}}|}{2\beta_{\pi^{\tt l}}}; \ \pi^{\tt l}\in\Slkgd \Bigr\} \\
\le \frac{C}m\Bigl(1+\sum_{n\ge 1} n \bP^\xi \Bigl\{ \frac{|S_{\pi^{\tt l}}|}{B} \ge n \Bigr\} \Bigr)
\le \frac{C}m\,.
\end{multline*}
Therefore, using Claim~\ref{lem-good},
\[
\bP^{\xi}\Bigl\{ \Bigl| \frac1{\beta_{\pi^{\tt l}}} \sum_{j=1}^m \pi^{\tt g}(j)
(\pi^{\tt l}\eta)'_j + m L_{\pi^{\tt l}}\Bigr| \le \frac{|S_{\pi^{\tt l}}|}{2\beta_{\pi^{\tt l}}}\Bigr\}
\le \frac{C}m +e^{-cm} \le \frac{C}m\,.
\]
This completes the proof of Lemma~\ref{lemma3} in the even case. \qed

\section{Anti-concentration for the symmetric group. Proof of Lemma~\ref{lem:mainanticoncentration}}

The proof of Lemma~\ref{lem:mainanticoncentration} goes in two steps; first, we prove the
anti-concentration estimate on the length-scale $\sqrt{n}$:

\medskip\noindent{\bf Lemma~$\bf 4'$.}
{\em Let $n\geq 2$ and let  $w_{1},\ldots ,w_{n}$ be real numbers such that $\sum_{i=1}^{n}w_{i}=0$ and $\sum_{i=1}^{n}w_{i}^{2}=1$. Let $\pi$ be a random
permutation
uniformly distributed on $\Sym_{n}$. Then, for every $L\in \bR$, we have
\begin{equation}\label{eq:ineqatrootnscale}
\bP\Bigl\{\Big| \sum_{i=1}^{n}w_{i}\pi (i) - Ln \Big| \le \sqrt{n}\Bigr\}\le\frac{C}{\sqrt{n}}\, e^{-c|L|}.
\end{equation}
}

\smallskip\noindent
Then we deduce from Lemma~$4'$ the full result, that is, the estimate
on the unit-length scale. Note that Lemma~$4'$ is weaker than Lemma~\ref{lem:mainanticoncentration},
since the latter Lemma implies the former one.

\subsection{Anti-concentration on the length-scale $\sqrt{n}$: proof of Lemma~$4'$}\label{subsec:actualproofofanticoncentration}

\subsubsection{Preliminaries}
Here,
we introduce notation and  random variables to be used in the proof of Lemma~$4'$.

\paragraph{Some notation.}
For each $\sigma\in \Sym_{n}$, let
$\Delta_{\sigma}=\{x\in [0,1]^{n}\colon x_{\sigma (1)}< x_{\sigma (2)}<
\ldots <x_{\sigma (n)}\}$. Then $\Delta_{\sigma}$ is a simplex.
The simplices $\Delta_{\sigma}$ and $\Delta_{\sigma'}$ are disjoint if
$\sigma\neq\sigma'$, and the union (over all $\sigma\in \Sym_{n}$)
of these simplices has full Lebesgue measure in the unit cube $[0,1]^{n}$. The barycenter of the simplex
$\Delta_{\sigma}$ is $P_{\sigma}:=\frac{1}{n+1}\sigma^{-1}$, where
$\sigma^{-1}=(\sigma^{-1}(1),\sigma^{-1}(2),\ldots ,\sigma^{-1}(n))$.

As above, we denote by $\sigma u$ the action of the permutation
$\sigma\in\Sym_n$ on the vector
$u=(u_1, \ldots , u_n)$, that is, $(\sigma u)_j = u_{\sigma (j)}$. We let
\[
u(\sigma)^2 = \sum_{j=1}^n \bigl( u_{\sigma (j)} + \ldots + u_{\sigma (n) }\bigr)^2\,.
\]

\paragraph{Random variables.}
Next we introduce certain important random variables. Throughout,
$V$ denotes a random variable with  uniform distribution in the unit
cube $[0,1]^{n}$. In coordinates, $V_{1},\ldots ,V_{n}$ are i.i.d. random variables
with uniform distribution on $[0,1]$. For given $\sigma\in \Sym_n$,
$V_{\sigma}$ denotes a random variable
with uniform  distribution in the simplex $\Delta_{\sigma}$.
Then,
$(V_{\sigma})_{\sigma (1)},\ldots ,(V_{\sigma})_{\sigma (n)}$
have the same joint distribution as
the order statistics of $n$ independent uniform random variables on $[0,1]$.

\paragraph{A bound on the density of a sum of independent uniform random variables.}
In the proof of Lemma~$4'$, we will use the following lemma:
\begin{lemma}\label{lem:unimodal} Let $w_{i}$ be real
	{\rm (}non-random{\rm )}
	numbers and let $U_{i}$ be  i.i.d. random variables with uniform distribution on
    $\bigl[ -\frac12, \frac12 \bigr]$. Let $X=\sum_{i=1}^{n}w_{i}U_{i}$. Let
    $p_{X}(\cdot)$ be the density of $X$.
\begin{enumerate}
\item If $\sum_{i=1}^{n}w_{i}^{2}=1$, then $p_{X}(t)\le Ce^{-c |t|}$ for all $t\in \bR$.
\item If $\sum_{i=1}^{n}w_{i}^{2}\le 1$, then $p_{X}(t)\le Ce^{-c |t|}$ for $|t|>1$.
\end{enumerate}
\end{lemma}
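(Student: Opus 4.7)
The plan is to derive the density bound by combining a sub-Gaussian estimate for $X$ with the log-concavity of $p_X$. The sub-Gaussian bound will provide tail control via Chernoff; unimodality of $p_X$ will let us transfer this tail control to a pointwise density bound away from the origin. The only delicate point is to bound $p_X$ near the origin in Part~1, which will require a separate argument.

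First, I would compute the moment generating function explicitly:
\[
\bE[e^{sX}] = \prod_{i=1}^n \frac{\sinh(sw_i/2)}{sw_i/2}.
\]
Using the Weierstrass product $\sinh(u)/u = \prod_{k\ge1}(1+u^2/(k\pi)^2) \le \exp(u^2/6)$ (where $\sum_k 1/k^2 = \pi^2/6$ produces the constant), this gives $\bE[e^{sX}] \le \exp\bigl(s^2\sum_i w_i^2/24\bigr) \le e^{s^2/24}$. Chernoff's inequality then yields $\bP(|X|\ge u) \le 2\exp(-6u^2)$ for all $u>0$.

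Second, since $p_X$ is log-concave (being the convolution of log-concave uniform densities) and symmetric about $0$ (each $w_iU_i$ is symmetric), $p_X$ is non-increasing on $[0,\infty)$ with mode at $0$. For $t>0$, monotonicity gives
\[
\bP(X\ge t/2) \ge \int_{t/2}^t p_X(s)\,ds \ge (t/2)\,p_X(t),
\]
so $p_X(t) \le (2/t)\exp(-3t^2/2)$. When $|t|\ge 1$, using $t^2\ge|t|$, this immediately yields $p_X(t)\le 2\exp(-3|t|/2) \le Ce^{-c|t|}$, which is exactly Part~2.

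For Part~1 it remains to control $p_X(t)$ on $|t|\le 1$; by unimodality this reduces to bounding $p_X(0)=\|p_X\|_\infty$. Under $\sum w_i^2=1$, the density admits the geometric interpretation as the $(n-1)$-dimensional Hausdorff measure of the slice $\{u\in[-\tfrac12,\tfrac12]^n : \langle w,u\rangle = t\}$, and K.~Ball's cube-slicing inequality bounds this by $\sqrt{2}$ uniformly in $t$; equivalently, one can cite the standard upper bound $\|f\|_\infty\le C/\sigma$ for symmetric log-concave densities $f$ on $\bR$ with standard deviation $\sigma$, applied here with $\sigma^2=1/12$. Either route gives $p_X(t)\le C$ for $|t|\le 1$ and hence the claimed $Ce^{-c|t|}$ after absorbing constants. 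The main obstacle is precisely this bound on $\|p_X\|_\infty$, which is not delivered by the Chernoff/unimodality machinery alone; once it is in hand, the rest of the proof is routine.
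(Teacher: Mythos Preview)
Your argument is correct and, for the tail regime $|t|\ge 1$, it coincides almost verbatim with the paper's second proof: the paper quotes the Bernstein--Hoeffding inequality where you compute the MGF via the Weierstrass product, and then uses the identical unimodality trick $\tfrac{t}{2}\,p_X(t)\le \int_{t/2}^t p_X \le \bP\{X\ge t/2\}$ to pass from tails to density. Both versions thus yield the sharper Gaussian bound $p_X(t)\le Ce^{-ct^2}$, which the paper also records.

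The only divergence is at the origin. The paper handles $\sup p_X$ in two separate ways: in its first proof it establishes precisely the fact you cite, namely $\|f\|_\infty\le C/\sigma$ for symmetric log-concave densities on $\bR$, by an elementary ``doubling'' argument (if $B$ is the first point where $p$ drops to $\tfrac12 p(0)$, log-concavity forces $p(kB)\le 2^{-k}p(0)$, and the two moment constraints pin down both $B$ and $p(0)$); in its second proof it instead bounds $\int_{\bR}\bigl|\prod_i \tfrac{\sin(\lambda w_i)}{\lambda w_i}\bigr|\,{\rm d}\lambda$ by a direct dyadic Fourier argument. Your Ball cube-slicing route is a third option not taken in the paper: it gives the sharp constant $\sqrt{2}$ but imports a considerably deeper result than is needed. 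Your alternative citation of the log-concave $\|f\|_\infty\le C/\sigma$ bound is exactly what the paper's first proof supplies, so if you want a self-contained write-up the doubling argument is the natural thing to spell out.
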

Lemma \ref{lem:unimodal} is probably known but for completeness
we prove it in Section~\ref{sec:proofofprobestimates}.
We also indicate therein how to get the stronger bound
$p_{X}(t)\le Ce^{-c t^{2}}$.
It will become clear that if we used that improved upper bound in place of
Lemma \ref{lem:unimodal},
we would get the bound $\frac{C}{\sqrt{n}}e^{-cL^{2}}$ and $\frac{C}{n}e^{-cL^{2}}$
in Lemmas~$4'$ and~\ref{lem:mainanticoncentration}, respectively.

\subsubsection{Idea of the proof of Lemma~$4'$} Our goal is to prove estimate~\eqref{eq:ineqatrootnscale}.
Two difficulties are (a) discreteness of the random variable $\pi$, and (b) dependence between the
random variables  $\pi(1),\ldots ,\pi(n)$. This motivates considering first the following ``baby-version''
of Lemma~\ref{lem:mainanticoncentration} that does not have these difficulties and is a straightforward
consequence of the bound for the density of $\langle w,V \rangle$ as given in Lemma~\ref{lem:unimodal}.

\paragraph{A baby-version of Lemma~$4'$.}
{\em Let $V_{i}$ be i.i.d. random variables having uniform distribution on $[0,1]$. Let $w_{1},\ldots ,w_{n}$
be real numbers satisfying $\sum_{i=1}^{n}w_{i}=0$ and $\sum_{i=1}^{n}w_{i}^{2}=1$. Then, for any $L\in \bR$
and $t>0$, we have
\[
\bP \Bigl\{ \Big|\sum_{i=1}^{n}w_{i}V_{i}-L \Big|\le t \Bigr\}\le Cte^{-c(|L|-t)_{+}}.
\]
In particular, for $t=\frac{1}{\sqrt{n}}$ or $t=\frac{1}{n}$ we get the bounds $\frac{C}{\sqrt{n}}e^{-c|L|}$ and $\frac{C}{n}e^{-c|L|}$, respectively.
}

\medskip
When we scale $V_{i}$ up by $n$ (so that they are uniform on $[0,n]$) then similarity with Lemma~$4'$ becomes clear.
Apart from analogy, observe that if $\pi$ is uniformly distributed on $\Sym_n$, then $\pi(i)$ (for any $i$) is uniformly distributed in $\{1,2,\ldots ,n\}$, and any
finite number of them, $\pi(i_{1}),\ldots ,\pi(i_{k})$, are nearly independent (for large $n$).

\paragraph{Bad permutations.}
We need to count {\em bad} permutations $\pi$ such that
 \[
 \Bigl| \sum_{i=1}^{n}w_{i} \pi^{-1}(i)-Ln \Bigr|\le \sqrt{n}\,.
 \]
This is almost the same as  $|\<w,P_{\pi}\>-L|\le \frac{1}{\sqrt{n}}$
(not exactly the same because $P_{\pi} = \frac{\pi^{-1}}{n+1}$,
which is not exactly $ \frac{\pi^{-1}}n$, but that difference will be shown to be harmless). Let
$\BB$ denote the set of bad permutations.

Let $f\colon [0,1]^{n}\to \bR_{+}$ be a measurable function. Then
\begin{equation}
\frac1{n!}\, | \BB | \cdot \min\limits_{\pi\in\BB}\, \bE[f(V_{\pi})] \le \bE[f(V)]. \label{eq:modifiedmarkov}
\end{equation}
 The idea is to find a function $f$ for which we can find an upper bound for $\bE[f(V)]$ and a lower bound for $\bE[f(V_{\pi})]$ for any $\pi\in \BB$.

\paragraph{A choice of the function $f$.}
The first natural choice would be $f(x)=\done_{\{|\<w,x\>-L|\le s\}}$ for an appropriate value of $s$.
The reason it fails is that although $\bE[V_{\pi}]=P_{\pi}$, there are many $\pi\in \BB$ for which the variance of $\<w,V_{\pi}\>$ (which is at most
$w(\pi)^{2}/n^{2}$ by a simple estimate given in Claim~\ref{lem:varianceofwdotvpi} below) is quite large.

We enhance the previous choice by taking $f(x)=F(x)\done_{\{|\<w,x\>-L|\le s\}}$ for an appropriately chosen $s$,
and with the choice
\begin{align}\label{eq:Fasalinfunctiononsimplex}
	F(x)=\sum_{j=1}^{n}(w_{\pi (j)} + \ldots + w_{\pi (n)})^{2}(x_{\pi (j)}-x_{\pi (j-1)}) \;\;\;\quad {\rm{ if\ }} x\in \Delta_{\pi}.
\end{align}
The key point here is that $\bE[F(V_{\pi})]=w(\pi)^{2}/(n+1)$, which is large precisely for the troublesome permutations (those for which
$\operatorname{Var}[\<w,V_{\pi}\>]\le w(\pi)^{2}/n^{2}$ is large). Thus, we can get a better lower bound for $\bE[f(V_{\pi})]$ for $\pi\in \BB$.
It turns out that we still retain a good upper bound for  $\bE[f(V)]$.

\medskip
Note that, in the proof, $\Sym_n$ will be broken into disjoint groups based on the value of $w(\pi)$, and inequality~\eqref{eq:modifiedmarkov}
will be applied within each group and then summed over the groups.

\subsubsection{Beginning of the proof of Lemma~$4'$}
Recall that $w(\pi)^{2}=\sum_{j=1}^{n}(w_{\pi (j)} + \ldots + w_{\pi (n)})^{2}$.
Using Cauchy-Schwarz and the normalization $\sum_{j=1}^{n}w_{j}^{2}=1$, we see that $w (\pi)^{2}\le n^{2}$ for all $\pi\in \Sym_{n}$.
Let $Q\ge 10$ be a fixed constant (its value is unchanged throughout this section).
We define the following sets whose union is all of $\Sym_{n}$.
\begin{itemize}
\item $\Sym (0):= \{\pi\in\Sym_n\colon w(\pi)\le 4(|L|+Q)\sqrt{n}\}$ and
\item $\Sym (\ell):=\{\pi\in\Sym_n\colon 2^{\ell-1}< w(\pi)\le 2^{\ell}\}$ for $\ell$ such that $4(|L|+Q)\sqrt{n}\le 2^{\ell}\le 2n$.
\end{itemize}
The goal is to prove the inequality~\eqref{eq:ineqatrootnscale}. We claim that it follows if we prove that
\begin{align}\label{eq:inequatrootnscale3}
\bP\bigl\{ |\<w,P_{\pi}\>-L| \le \frac{1}{\sqrt{n}}\bigr\}\le \frac{C}{\sqrt{n}}e^{-c|L|}.
\end{align}
Indeed, what we want to bound in~\eqref{eq:ineqatrootnscale} is
\begin{align*}
\bP\Bigl\{ \Big| \sum_{i=1}^{n}w_{i}\pi (i) - Ln \Big| \le \sqrt{n}\Bigr\} &= \bP\Bigl\{ \Big| \frac{1}{n+1} \sum_{i=1}^{n}w_{i}\pi (i) - L\frac{n}{n+1} \Big| \le
\frac{\sqrt{n}}{n+1}\Bigr\} \\
&\le \bP \Bigl\{ \Big|\<w,P_{\pi}\> - L \frac{n}{n+1}\Big|\le \frac{1}{\sqrt{n}}\Bigr\}
\end{align*}
because $P_{\pi}=\frac{1}{n+1}\pi^{-1}$ and $\pi^{-1}$ has the same distribution as $\pi$.
Applying~\eqref{eq:inequatrootnscale3} we get
\[
\bP \Bigl\{ \Big| \sum_{i=1}^{n}w_{i}\pi (i) - Ln \Big| \le \sqrt{n}\Bigr\} \le \frac{C}{\sqrt{n}}e^{-c|Ln/(n+1)|}
\le \frac{C}{\sqrt{n}}e^{-\frac{c}{2}|L|}.
\]
Let $\BB=\{\pi\in\Sym_n\colon |\<w,P_{\pi}\>-L|\le \frac{1}{\sqrt{n}}\}$ be the set of all ``bad'' permutations.
We shall get bounds for the cardinality of $\BB\cap \Sym (\ell)$ and thus get a bound for $\bP\{\BB\}$.

Before starting the proof, we note that it suffices to prove~\eqref{eq:inequatrootnscale3} for $n\ge n_{0}$ for some fixed $n_{0}$.
The reason is that for $n\le n_{0}$,
\[
|\<w,P_{\pi}\>| \le \sqrt{\sum_{i=1}^{n}w_{i}^{2}} \cdot \sqrt{\frac{1}{(n+1)^{2}}\sum_{i=1}^{n}(\pi^{-1}(i))^{2}} \le L_{0}
\]
for a constant $L_{0}$ (not depending on the choice of $w_{i}$s or $\pi$ or $n$). Hence by choosing $C$ so large that $\frac{C}{\sqrt{n_{0}}}e^{-c(L_{0}+1)}\ge 1$,
the inequality~\eqref{eq:inequatrootnscale3} is trivially satisfied  for all $n\le n_{0}$ and for all $L\in \bR$
(for $|L|>L_{0}+1$, the probability is zero while, for $|L|\le L_{0}+1$, the right-hand side in~\eqref{eq:inequatrootnscale3} is bigger than $1$).

\subsubsection{A bound on $\operatorname{Var}[ \langle w, V_\sigma\rangle ]$}

To proceed, we need to bound the variance of $\langle w, V_\sigma\rangle$.
\begin{claim}\label{lem:varianceofwdotvpi} Let $w=(w_{1},\ldots , w_{n})$ be a vector in $\bR^{n}$. Let
$\sigma\in\Sym_n$, and let
$V_\sigma$ be uniform on $\Delta_{\sigma}$. Then,
\[
\operatorname{Var}[ \langle w,V_{\sigma} \,\rangle ]\le \frac{w(\sigma)^2}{(n+1)(n+2)}\,.
\]
\end{claim}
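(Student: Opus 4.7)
My plan is to reduce the claim to a calculation with a Dirichlet distribution via the standard spacings representation.

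First I would reduce to the case $\sigma = \mathrm{id}$. By relabelling coordinates (replace $w_i$ by $w_{\sigma(i)}$ and note that both sides of the inequality depend on $w$ only through $w(\sigma)^2 = \sum_{j=1}^n (w_{\sigma(j)} + \ldots + w_{\sigma(n)})^2$), it suffices to prove the inequality for the identity permutation, in which case $V_\sigma$ is uniform on $\Delta_{\mathrm{id}} = \{x \in [0,1]^n : x_1 < x_2 < \ldots < x_n\}$, and $V_i$ has the same joint distribution as the order statistics of $n$ i.i.d. uniforms on $[0,1]$.

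Next I would introduce the spacings $Y_1 = V_1$, $Y_i = V_i - V_{i-1}$ for $2 \le i \le n$, and $Y_{n+1} = 1 - V_n$. It is classical that $(Y_1, \ldots, Y_{n+1})$ is uniform on the $n$-dimensional simplex $\{y \ge 0 : \sum_{j=1}^{n+1} y_j = 1\}$, i.e. Dirichlet with all parameters equal to $1$. Setting $W_j = w_j + w_{j+1} + \ldots + w_n$ and telescoping, I get
\[
\langle w, V_\sigma \rangle = \sum_{i=1}^n w_i \sum_{j=1}^i Y_j = \sum_{j=1}^n W_j Y_j,
\]
and note that $w(\sigma)^2 = \sum_{j=1}^n W_j^2$. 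Extending by $W_{n+1} = 0$, the quantity of interest is a linear form $\sum_{j=1}^{n+1} W_j Y_j$ in the Dirichlet vector.

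The main computation is then to invoke the known moments of the symmetric Dirichlet: $\operatorname{Var}[Y_j] = \frac{n}{(n+1)^2(n+2)}$ and $\operatorname{Cov}[Y_j, Y_k] = -\frac{1}{(n+1)^2(n+2)}$ for $j \ne k$. Expanding,
\[
\operatorname{Var}\Bigl[\sum_{j=1}^{n+1} W_j Y_j\Bigr] = \frac{1}{(n+1)^2(n+2)}\Bigl[(n+1)\sum_{j=1}^{n+1} W_j^2 - \Bigl(\sum_{j=1}^{n+1} W_j\Bigr)^2\Bigr] \le \frac{\sum_{j=1}^{n+1} W_j^2}{(n+1)(n+2)} = \frac{w(\sigma)^2}{(n+1)(n+2)},
\]
which is exactly the desired bound.

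There is no real obstacle here; the only thing to be careful about is the algebraic rearrangement that turns $n\sum W_j^2 - 2\sum_{j<k} W_j W_k$ into $(n+1)\sum W_j^2 - (\sum W_j)^2$, after which the square term can simply be dropped for the inequality. Alternatively, one could avoid quoting Dirichlet moments and instead use the representation $Y_j = E_j / (E_1 + \ldots + E_{n+1})$ with $E_j$ i.i.d.\ Exponential$(1)$, but the Dirichlet computation above is the most direct route.
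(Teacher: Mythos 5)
Your proposal is correct and follows essentially the same route as the paper: reduce to order statistics, pass to the spacings (Dirichlet with all parameters equal to $1$), write $\langle w,V_\sigma\rangle$ as a linear form in the spacings with coefficients given by the tail sums, and then compute the variance; the final step of dropping the $(\sum_j W_j)^2$ term is exactly what the paper does too. The only difference is cosmetic: you quote the symmetric Dirichlet variance and covariance, whereas the paper derives them from exchangeability of the spacings together with the explicit second moment of the minimum of $n$ uniforms.
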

The proof of this claim is rather straightforward. We will give it in Section~\ref{sect:proof-of-claim9}.

\subsubsection{A bound on the cardinality of $\Sym_n^{\tt bad} \cap \Sym (0)$}
Let $f: [0,1]^{n}\to \bR_{+}$ be defined by
\[
f(x)=\done_{\{ |\<w,x\>-L|\le \frac{9(|L|+Q)}{\sqrt{n}}\}}\,.
\]
If $\pi\in \BB\cap \Sym (0)$, then $w(\pi)\le 4\sqrt{n}(|L|+Q)$ and $|\<w,P_{\pi}\>-L|\le \frac{1}{\sqrt{n}}$. Note that $\<w,V_{\pi}\>$ has
mean $\<w,P_{\pi}\>$ and variance at most $\frac{1}{n^{2}} w(\pi)^{2}$ (by Claim~\ref{lem:varianceofwdotvpi}). Therefore, by Chebyshev's inequality,
\[
\Big| \<w,V_{\pi}\>-\<w,P_{\pi}\> \Big| \le 2\frac{w(\pi)}{n} \;\;\;\; \quad \text{  with probability at least }\frac12\,.
\]
By the bound on $w(\pi)$, we see that
\[
|\<w,V_{\pi}\>-L|\le \frac{1+8(|L|+Q)}{\sqrt{n}} \;\;\;\; \quad \text{  with probability at least }\frac12\,.
\]
As $Q\ge 10$, we can write $1+8(|L|+Q)\le 9(|L|+Q)$ and hence,
\begin{equation}\label{eq:lbdforS0}
\bE[f(V_{\pi})]\ge \frac12 \;\;\; \quad \text{ for } \pi\in \BB\cap \Sym (0).
\end{equation}
Now we find an upper bound for
\[
\bE[f(V)] = \bP \bigl\{ | \<w,V\>-L|\le \frac{9(|L|+Q)}{\sqrt{n}} \bigr\}.
\]
Write $\<w,V\>=\sum_{i=1}^{n}w_{i}V_{i}=\sum_{i=1}^{n}w_{i}(V_{i}-\frac12 )$ (since $\sum_{i=1}^{n}w_{i}=0$) and recall that
$\sum_{i=1}^{n}w_{i}^{2}=1$ to  see that the first part of  Lemma~\ref{lem:unimodal} is applicable. It  gives
\begin{align}\label{eq:lemunimodalfirst}
\bP \bigl\{| \<w,V\>-L|\le \frac{9}{\sqrt{n}}(|L|+Q) \bigr\} \le C\frac{|L|+Q}{\sqrt{n}}e^{-c(|L|-\frac{9}{\sqrt{n}}(|L|+Q) )_{+} }.
\end{align}
If $|L|<1$, we drop the exponential term, and multiply by $e^{-|L|+1}$ which is at least $1$. If $|L|\ge 1$, then for $n\ge n_{0}$, we have
$|L|-\frac{9(|L|+Q)}{\sqrt{n}}\ge \frac12 |L|$. Thus, in either case, we get the bound (for $n\ge n_{0}$)
\begin{align}\label{eq:ubdforS0}
\bE[f(V)] \le \frac{C}{\sqrt{n}}(|L|+Q)e^{-\frac{c}{2} |L|} \le \frac{C}{\sqrt{n}}e^{-\frac{c}{4}|L|}
\end{align}
since $x\to (x+Q)e^{-cx/4}$ is bounded for $x\in (0,\infty)$.

Invoking~\eqref{eq:modifiedmarkov}, we conclude from~\eqref{eq:lbdforS0} and~\eqref{eq:ubdforS0} that, for $n\ge n_{0}$ and for all $L\in \bR$,
\begin{align}\label{eq:finalbdforS0}
\frac{1}{n!}|\BB\cap \Sym (0)|\le \frac{C}{\sqrt{n}}e^{-c|L|}.
\end{align}

\subsubsection{A bound on the cardinality of $\Sym_n^{\tt bad} \cap \Sym (\ell)$ with $4(|L|+Q)\sqrt{n}\le 2^\ell \le 2n$}

Fix $T=2^{\ell}$ so that $\frac{T}{2}\le w(\pi)\le T$. Let $f_T\colon [0,1]^{n}\to \bR_{+}$ be defined as
\[
f_T(x)=F(x)\done_{ \{ |\<w,x\>-L|\le \frac{2QT}{n} \} }\,,
\]
where  $F$ is the function that was defined in \eqref{eq:Fasalinfunctiononsimplex}.

\paragraph{A lower bound for $\bE[f(V_\pi)]$, $\pi\in\Sym_n^{\tt bad} \cap \Sym (\ell)$.}
The random variable $\<w,V_{\pi}\>$ has mean $\<w,P_{\pi}\>$ and Claim~\ref{lem:varianceofwdotvpi}  asserts that
$\operatorname{Var}[\<w,V_{\pi}\>] \le \frac{T^{2}}{n^{2}}$. By Chebyshev's inequality,
\[
|\<w,V_{\pi}\>-\<w,P_{\pi}\>|\le Q\frac{T}{n} \;\;\;\;\quad \text{  with probability at least } 1-\frac{1}{Q^{2}}\,.
\]
If $\pi\in \BB\cap \Sym(\ell)$, then, in addition to the above, we have $|\<w,P_{\pi}\>-L|\le \frac{1}{\sqrt{n}}$. Therefore, recalling that $Q\ge 10$,
\begin{align}\label{eq:lbdforf-1}
|\<w,V_{\pi}\>-L|\le \frac{\sqrt{n}+QT}{n} \;\;\;\;\quad \text{  with probability at least } 0.99\,.
\end{align}

By the definition~\eqref{eq:Fasalinfunctiononsimplex}, we can write
\[
F(V_{\pi}) = \sum_{j=1}^{n}\alpha_{j}((V_{\pi})_{\pi (j)}-(V_{\pi})_{\pi (j-1)}) \qquad \text{ with }
\alpha_{j}=(w_{\pi (j)}+\ldots + w_{\pi (n)})^{2}\,.
\]
Then, recall that $\bE[V_{\pi}]=P_{\pi}$ which is $\frac{1}{n+1}\pi^{-1}$. Hence, $\bE[(V_{\pi})_{\pi (j)}]=\frac{j}{n+1}$ and
\[
\bE[F(V_{\pi})] = \frac{1}{n+1}\sum_{j=1}^{n}\alpha_{j} = \frac{1}{n+1}\, w(\pi)^{2}.
\]

We may also rewrite $F(V_{\pi})$ as $\sum_{j=1}^{n}(\alpha_{j}-\alpha_{j+1})(V_{\pi})_{\pi (j)}$ (with the convention that $\alpha_{n+1}=0$). Then, applying
Claim~\ref{lem:varianceofwdotvpi} we get $\operatorname{Var}[F(V_{\pi})] \le \frac{1}{(n+1)^{2}}\, \sum_{j=1}^{n}\alpha_{j}^{2}$. By the second moment
inequality\footnote{
The second moment inequality asserts that if $X$ is a non-negative random variable, then $\bP\{X\ge \lambda \bE[X]\}\ge
(1-\lambda)^{2}\frac{(\bE[X])^{2}}{\bE[X^{2}]}$ for  $0<\lambda<1$. The proof is a straightforward application of the Cauchy-Schwarz inequality,
see~\cite[p.8]{kahane}.}
\begin{align*}
\bP \bigl\{ F(V_{\pi}) \ge \frac12 \bE[F(V_{\pi})] \bigr\} &\ge
\frac{1}{4}\, \frac{(\bE[F(V_{\pi})])^{2}}{\operatorname{Var}[F(V_{\pi})]+(\bE[F(V_{\pi})])^{2}}  \\
&= \frac{1}{4}\, \Bigl( 1 + \frac{\operatorname{Var}[F(V_{\pi})]}{(\bE[F(V_{\pi})])^{2}}\Bigr)^{-1} \\
&\ge \frac{1}{4}\, \Bigl(1+\frac{\sum_{j=1}^{n}\alpha_{j}^{2}}{(\sum_{j=1}^{n}\alpha_{j})^{2}} \Bigr)^{-1}\,.
\end{align*}
Since
$\alpha_{j}$s are all non-negative, $(\sum_{j}\alpha_{j})^{2}\ge \sum_{j}\alpha_{j}^{2}$ and hence
\[
\bP \bigl\{ F(V_{\pi})\ge \frac12 \bE[F(V_{\pi})] \bigr\} \ge \frac{1}{8}\,.
\]
Combine this with~\eqref{eq:lbdforf-1} to conclude that
\[
\bP \bigl\{F(V_{\pi})\ge \frac{1}{2(n+1)} w(\pi)^{2}  \ \text{ and }\ |\<w,V_{\pi}\>-L|\le \frac{2QT}{n}\bigr\} \ge \frac{1}{10}.
\]
Consequently,
\begin{align}\label{eq:lbdforSl}
\bE[f_T(V_{\pi})]\ge \frac{w(\pi)^2}{2(n+1)}\cdot\frac1{10}\, \stackrel{w(\pi)\ge T/2}\ge \,
\frac{T^2}{80(n+1)} \ge \frac{T^{2}}{160 n} \;\;\;\quad \text{ for }\pi\in \BB\cap \Sym (\ell)\,.
\end{align}

\paragraph{An upper bound for $\bE[f_T(V)]$.}
For this, we write $F(x)$ in the following alternative form.
\[
F(x)=\int_{0}^{1}(G_{t}(x))^{2}{\rm d}t \;\;\;\quad \text{ where } G_{t}(x)=\sum_{j=1}^{n}w_{j}\done_{\{x_{j}>t\}}\,.
\]
It is easy to check that this agrees with \eqref{eq:Fasalinfunctiononsimplex}. Hence,
\begin{align}\label{eq:newformforf}
\bE[f_T(V)] = \int_{0}^{1} \bE \bigl[ G_{t}(V)^{2}\done_{\{|\<w,V\>-L|\le \frac{2QT}{n}\}} \bigr] {\rm d}t\,.
\end{align}
Fix $t\in (0,1)$ and let  $S_{t}=\{i\colon V_{i}>t\}$. We estimate the integrand for each $t$.

\medskip
Write
\begin{align}\label{eq:ubdforSl1-1}
\bE \Bigl[ G_{t}(V)^{2}\done_{\{|\<w,V\>-L|\le \frac{2QT}{n}\}} \Bigr]  &\le 64(|L|+2Q)^{2}\bP \Bigl\{ |\<w,V\>-L|\le \frac{2QT}{n} \Bigr\} \nonumber \\
&\quad  + \bE\Bigl[ G_{t}(V)^{2}\done_{\{|G_{t}(V)|\ge 8(|L|+2Q)\}}\, \done_{\{|\<w,V\>-L|\le \frac{2QT}{n}\}}\Bigr]\,.
\end{align}
We bound the first term along similar lines to the case $\ell=0$. Since $\sum_{i=1}^{n}w_{i}^{2}=1$,
we may apply the first part of Lemma~\ref{lem:unimodal} to the random variable $\<w,V\>$ to get
\begin{align}\label{eq:lemunimodalsecond}
\bP \Bigl\{ |\<w,V\>-L|\le \frac{2QT}{n} \Bigr\} \le C\,\frac{2QT}{n}\,\exp\Bigl\{ -c\Bigl(|L|-\frac{2QT}{n}\Bigr)_{+} \Bigr\}\,.
\end{align}
Since $T\le 2n$ we see that $\frac{2QT}{n}\le 4Q$, a constant. Hence, dividing into the cases $|L|\ge 8Q$ and $|L|\le 8Q$, and changing constants suitably, for $n\ge
n_{0}$ and for all $L\in \bR$ we have the inequality
\[
\bP \Bigl\{ |\<w,V\>-L|\le \frac{2QT}{n}\Bigr\} \le C\,\frac{T}{n}\,\exp\bigl\{-c|L|\bigr\}\,.
\]
Thus, the first term in~\eqref{eq:ubdforSl1-1} is bounded by (for $n\ge n_{0}$ and for all $L\in \bR$)
\begin{equation}\label{eq:ubdforSl1-2}
64(|L|+2Q)^{2} \bP \bigl\{ |\<w,V\>-L|\le \frac{2QT}{n} \bigr\} \le  C\,\frac{T}{n}\,(|L|+2Q)^{2}e^{-c|L|}
\le C\,\frac{T}{n}\,e^{-c|L|/2}\,,
\end{equation}
again because $x\mapsto (x+2Q)^{2}e^{-cx/2}$ is bounded.

\medskip
It remains to control the second term in \eqref{eq:ubdforSl1-1}.
The trick is to condition on the random set $S_{t}=\{i\colon V_{i}>t\}$. We need to understand the conditional distributions of
the two random variables $G_{t}(V)$ and $\<w,V\>$. The first one is easy since $G_{t}(V)=\sum_{i\in S_{t}}w_{i}$,
which is a function of $S_{t}$. In other words, conditional on $S_{t}$, the variable $G_{t}(V)$ is a constant.

Next, conditional on the set $S_{t}$, the random variables $V_{1},\ldots ,V_{n}$ are still independent, $V_{i}$ is uniformly distributed on $[t,1]$ if $i\in S_{t}$
and $V_{i}$ is uniformly distributed in $[0,t]$ if $i\not\in S_{t}$. Let $U_{i}$ be independent random variables distributed uniformly on $[ -\frac12 ,\frac12 ]$ and
set
\[
V_{i}'=\begin{cases}
\frac12 t+tU_{i} & \text{ if }i\not\in S_{t}, \\
\frac12 (1+t)+(1-t)U_{i} & \text{ if }i\in S_{t}.
\end{cases}
\]
Then, the distribution of the vector $V'=(V_{1}',\ldots ,V_{n}')$ is the same as the conditional distribution of $V$ given $S_{t}$. In particular, the conditional
distribution of $\<w,V\>$ given $S_{t}$ is the same as the unconditional distribution of
\begin{align*}
\<w,V'\> &= \frac12\, (1+t) \sum_{i\in S_{t}}w_i + \sum_{i\in S_{t}}w_{i}(1-t)U_{i} + \frac12\, t  \sum_{i\not\in S_{t}}w_{i} + \sum_{i\not\in
S_{t}}w_{i}tU_{i} \\
&= \frac12\, G_{t}(V) + \sum_{i\in S_{t}}w_{i}(1-t)U_{i}  +\sum_{i\not\in S_{t}}w_{i}tU_{i}.
\end{align*}
The quantity we need to control is
$\bE\Bigl[ G_{t}(V)^{2}\done_{\{ G_{t}(V)\ge 8(|L|+2Q)\}} \done_{\{ |\<w,V\>-L|\le \frac{2QT}{n} \}}\Bigr]$.
By conditioning on
$S_{t}$, we may write this as
\begin{align}\label{eq:anamika1}
\bE\Bigl[ G_{t}(V)^{2}\done_{\{|G_{t}(V)|\ge 8(|L|+2Q)\}}\, \bE\Bigl[ \done_{\{|\<w,V\>-L|\le \frac{2QT}{n}\}} \;\Big| \; S_{t}\Bigr]\, \Bigr]\,.
\end{align}
(As $G_{t}(V)$ is a function of $S_{t}$, factors involving it can be
taken out of the conditional expectation.) Using our representation
of the conditional distribution of $\<w,V\>$ in terms of
the $U_{i}$s, the inner conditional expectation may be written as
\begin{align*}
\bE\Bigl[ \done_{\{|\<w,V\>-L|\le \frac{2QT}{n}\}} \;\Big| \; S_{t}\Bigr]
&= \bE\Bigl[\done_{\{|\<w',V\>-L|\le \frac{2QT}{n}\}}\;\Big|\; S_t\Bigr] \\
&= \bP\Bigl\{\Big|\sum_{i=1}^{n}w_{i}'U_{i}+\frac12 G_{t}(V)-L \Big|
\le \frac{2QT}{n} \;\Big|S_t\;\Bigr\}\,,
\end{align*}
where $w_{i}'=(1-t)w_{i}$ if $i\in S_{t}$ and $w_{i}'=tw_{i}$ if $i\not\in S_{t}$.
Again, we want to apply the density
bound from Lemma~\ref{lem:unimodal}. However, we only have the upper bound
$\sum_{i=1}^{n}(w_{i}')^{2}\le 1$, and hence, to apply
the second part of that lemma, we need to make sure that
at least on the event $\{|G_t(V)|\geq 8(|L|+2Q)\}$,  see~\eqref{eq:anamika1},
the interval
\[
\Bigl[ \frac12 G_{t}(V)-L-\frac{2QT}{n}, \frac12 G_{t}(V)-L+\frac{2QT}{n} \Bigr]
 \]
 is at distance at least $1$ from the origin. Once we show that, the second part of Lemma~\ref{lem:unimodal} gives the bound
\begin{align}\label{eq:anamika2}
\bE\Bigl[ \done_{\{|\<w,V\>-L|\le \frac{2QT}{n}\}} \;\Big| \; S_{t}\Bigr] \le
C\,\frac{2QT}{n}\,\exp\Bigl\{-c\Bigl(\frac12 |G_{t}(V)|-|L|-\frac{2QT}{n} \Bigr)_{+} \Bigr\}\,.
\end{align}
But
on the event $\{|G_t(V)|\geq 8(|L|+2Q)\}$,
recalling  that $\frac{2QT}{n}\le 4Q$, one has
\[
\Big| \frac12 G_{t}(V)-L\pm\frac{2QT}{n} \Big| \ge \frac{1}{2}|G_{t}(V)|- |L|-4Q \ge 3|L|+4Q\,,
\]
which is at least $1$. Thus, the bound~\eqref{eq:anamika2} is valid.

We now multiply the left hand side of~\eqref{eq:anamika2} by $G_{t}(V)^{2}\done_{\{G_{t}(V)\ge 8(|L|+2Q)\}}$ and take expectations.
If  the indicator is to be non-zero, then
\[
\frac12\, G_{t}(V)-|L|-\frac{2QT}{n} \ge \frac{1}{8}\, G_{t}(V) + 3(|L|+2Q)-|L|-4Q = \frac{1}{8}\, G_{t}(V) + 2|L|+2Q.
\]
Using this bound along with~\eqref{eq:anamika2}, we get
\[
\text{the second term in }\eqref{eq:ubdforSl1-1} \le C\frac{T}{n}\, \bE \Bigl[ G_{t}(V)^{2}\exp\Bigl\{-c\bigl(\frac{1}{8}\,G_{t}(V) + 2|L|+2Q\bigr) \Bigr\} \Bigr]
\le C\,\frac{T}{n}\,e^{-c|L|}
\]
since $G_{t}(V)^{2}\exp\{-cG_{t}(V)\}$ is bounded by a constant.

Adding this to the bound for the first term given in~\eqref{eq:ubdforSl1-1}, we arrive at
\[
\bE\bigl[ G_{t}(V)^{2}\done_{\{|\<w,V\>-L|\le \frac{2QT}{n}\}} \bigr] \le C\,\frac{T}{\sqrt{n}}\,e^{-c|L|}.
\]
Integrating this bound over $t$ and plugging into~\eqref{eq:newformforf} gives us
\begin{align}\label{eq:ubdforSl}
\bE[f_T(V)] \le C\,\frac{T}{n}\,e^{-c|L|}\,.
\end{align}

\paragraph{Tying the ends together.}
At last, juxtaposing~\eqref{eq:lbdforSl} and~\eqref{eq:ubdforSl}, and recalling that $ T=2^\ell $,
we conclude that
\begin{align}\label{eq:finalbdforSl}
\frac{1}{n!}|\BB\cap \Sym (\ell)| \le \frac{\bE[f_T(V)]}{\min\limits_{\pi\in\BB} \bE[f_T(V_\pi)]} \le
\frac{C\,n}{T^2} \cdot \frac{C\,T}{n}\,e^{-c|L|}
= \frac{C}{T}\,e^{-c|L|}\,
=\, \frac{C}{2^{\ell}}\,e^{-c|L|}\,.
\end{align}

\subsubsection{End of the proof of Lemma~$4'$}

Adding up~\eqref{eq:finalbdforSl} over $\ell$ such that $2^{\ell}\ge 4(|L|+Q)\sqrt{n}$
and recalling the estimate for $\frac{1}{n!}|\BB\cap \Sym (0)|$ we got in~\eqref{eq:finalbdforS0},
we obtain
\begin{align*}
\frac1{n!} | \BB | &\le \frac1{n!} | \BB \cap \Sym (0)|\ +
\sum_{2^{\ell}\ge 4(|L|+Q)\sqrt{n}}\, \frac1{n!} | \BB \cap \Sym (\ell)| \\
&\le
\frac{C}{\sqrt{n}}\, e^{-c|L|}\ + \sum_{2^{\ell}\ge 4(|L|+Q)\sqrt{n}}\, \frac{C}{2^{\ell}}\,e^{-c|L|}
\le \frac{C}{\sqrt{n}}\, e^{-c|L|}\,.
\end{align*}
This completes the proof of Lemma~$4'$. \hfill $\Box$

\subsection{From the scale $\sqrt{n}$ to the unit length-scale}\label{sec:inequatunitscale}

For the permutation $\pi\in \Sym_{n}$, define its ``weight'' as
$\text{wt}[\pi]=\sum_{j=1}^{n} \pi (j) w_j=\sum_{j=1}^{n}jw_{\pi (j)}$.  Fix $L$ and define the set of ``bad permutations'' $\BB$ as
the set of all $\pi\in \Sym_{n}$ for which  $| \text{wt}[\pi]-Ln|\le 1$.

\subsubsection{Partition of $\Sym_n$}

For each permutation $\sigma =(\sigma (1),\ldots ,\sigma (n-1))\in \Sym_{n-1}$, let
\begin{align*}
\widehat{\Sym}_{\sigma} &=
\bigl\{ \pi^{(1)}_{\sigma} = (n,\sigma (1),\ldots ,\sigma (n-1)\,), \,
\pi^{(2)}_{\sigma} = (\sigma (1), n, \sigma (2), \ldots , \sigma (n-1)),\, \ldots \\
&\quad \ldots\,, \,\pi^{(n)}_{\sigma}=(\sigma (1),\ldots ,\sigma (n-1), n)\bigr\}.
\end{align*}
Then, $\{\widehat{\Sym}_{\sigma}\colon \sigma\in \Sym_{n-1}\}$ is a partition of $\Sym_{n}$ into groups of $n$ permutations each.
The key point is to show that, for most $\sigma$, there are only a few bad permutations in $\widehat{\Sym}_{\sigma}$.

\subsubsection{The subset $\cA\subset \Sym_{n-1}$ }

Let $b=\max_{j}w_{j}-\min_{j}w_{j}$ and note that, due to normalization $\sum w_j = 0$ and $\sum w_j^2 =1$,
we have  $\frac{1}{\sqrt{n}}\le b\le 2$. We will need two subsets, $\cA$ and $\cE$ of $\Sym_{n-1}$.  These sets are ``exceptional'',
in the sense that they have small cardinality. Here, we will define $\cA$, the set $\cE$ will be defined later.

Let $\cA$ be the set of $\sigma\in \Sym_{n-1}$ for which $\text{wt}[\pi]\in [Ln-1-nb,Ln+1+nb]$ for all $\pi\in \widehat{\Sym}_{\sigma}$.
Then,
\begin{align*}
\frac{1}{(n-1)!}|\cA| &\le \frac{1}{n!}\Big|\{\pi\in \Sym_{n}\colon \text{wt}[\pi]\in [Ln-1-nb,Ln+1+nb]\}\Big| \\
&= \bP \bigl\{ \text{wt}[\pi]\in [Ln-1-nb,Ln+1+nb] \bigr\}\,.
\end{align*}
Divide the interval $[Ln-1-nb,Ln+1+nb]$ into $\lceil\frac{nb+1}{\sqrt{n}} \rceil$ intervals of length $2\sqrt{n}$ (or less) each, and
invoke Lemma~$4'$ for each of them. All the intervals are at distance at least $(|L|-b-\frac{1}{n})n$ from the origin, and hence
\begin{align}
\frac{1}{(n-1)!}|\cA| \le \Big\lceil\frac{nb+1}{\sqrt{n}} \Big\rceil\, \frac{C}{\sqrt{n}}\,e^{-c(|L|-3)_{+}}
\le Cbe^{-c|L|}\,.  \label{eq:firstcrudebound}
\end{align}

\subsubsection{A relabelling of $w_1$, ...., $w_n$}
To define the exceptional set $\cE$ we consider three cases: $b\ge \tfrac1{12}$, $n^{-\frac13}\le b \le \tfrac1{12}$,
and $n^{-\frac12}\le b \le n^{-\frac13}$.
First, we arrange the $w_{i}$s in a suitable manner:
\begin{enumerate}
\item $b\ge \frac{1}{12}$. Label $w_{i}$s so that $w_{n}=\max_{j}w_{j}$ or $w_{n}=\min_{j}w_{j}$, whichever of the two is larger in absolute value.
\item $n^{-\frac{1}{3}}\le b\le \frac{1}{12}$. As in the first case, we label $w_{i}$s so that $w_{n}=\max_{j}w_{j}$ or $w_{n}=\min_{j}w_{j}$, whichever of the two is larger in
    absolute
    value.
\item $\frac{1}{\sqrt{n}}\le b\le n^{-\frac{1}{3}}$. Label $w_{i}$s so that $w_{n}=\max_{j}w_{j}$ or $w_{n}=\min_{j}w_{j}$,

whichever of the two is such that the cardinality of $\{j\colon |w_{n}-w_{j}|\ge \frac{b}{2}\}$ is at least $\frac{n}{2}$.
\end{enumerate}
For simplicity of language, let us assume that $w_{n}=\max_{j}w_{j}$ in all cases
(in fact, there is no loss of generality as we may  negate all the $w_{j}$s and $L$ if needed).
Then, for any $\sigma\in \Sym_{n-1}$ and any $k\le n-1$, we have
\begin{align}\label{eq:weightsincreaseineachgroup}
\text{wt}[\pi^{(k+1)}_{\sigma}]-\text{wt}[\pi^{(k)}_{\sigma}] = w_{n}-w_{\sigma (k)}.
\end{align}
As $w_{n}=\max_{j}w_{j}$, these increments are non-negative. Thus, for any $\sigma$,
\begin{itemize}
\item we have $\text{wt}[\pi_\sigma^{(n)}] - \text{wt}[\pi_\sigma^{(1)}]\le nb$, and therefore,
if $\sigma\notin\cA$, then $\BB\cap \widehat{\Sym}_\sigma$ is empty;
\item
the set of $k$ for which $\pi^{(k)}_\sigma$ is bad, is a
discrete interval of the form $\{k_{\sigma}, k_{\sigma}+1, \ldots , k_{\sigma}+\ell_{\sigma}-1\}$.
\end{itemize}

\subsubsection{The exceptional set $\cE\subset\Sym_{n-1}$}

In each of the three cases, we define the exceptional set $\cE$ and get an upper bound for $\ell_{\sigma}$ for $\sigma\not\in \cE$.
We also need to control the cardinality of $\cE$, of course.

\paragraph{1st case: $b\ge \frac{1}{12}$.}
In this case $w_{n}\ge \frac{b}{2}$. Let $A=\{k\colon w_{k}\ge \frac{b}{4}\}$ and observe that
$|A|\le \frac{16}{b^{2}}$ since  $\sum w_{i}^{2}=1$.

Now fix any $\sigma\in\Sym_{n-1}$. If, for some $k\in\{1, 2, \ldots , n-1\}$, $\sigma_{k}\not\in A$, then,
by~\eqref{eq:weightsincreaseineachgroup}, we see that $\text{wt}[\pi^{(k+1)}]-\text{wt}[\pi^{(k)}] > \frac{b}{4}$.
Recall that bad permutations are those for which $\text{wt}[\pi]$ is in $[Ln-1,Ln+1]$, an interval of length $2$.
This shows that
\[
\Bigl( \ell_{\sigma}-1-\frac{16}{b^{2}}\Bigr)\, \frac{b}{4} \le w_{k_{\sigma}+\ell_{\sigma}-1}-w_{k_{\sigma}}\le 2\,.
\]
Hence $\ell_{\sigma} \le \frac{16}{b^{2}}+\frac{8}{b}+1$ for all $\sigma\in \Sym_{n-1}$.
Set $C=\frac{16}{(1/12)^{2}}+\frac{8}{(1/12)}+1$. Let $\cE=\emptyset$ (no need in exceptional permutations in this
case). Then,
\begin{align}\label{eq:exceptionalset1}
|\cE| &= 0 \text{ and } \ell_{\sigma}\le C \text{ for all } \sigma \in \Sym_{n}.
\end{align}

\paragraph{2nd case: $n^{-\frac{1}{3}}\le b\le \frac{1}{12}$.}
Fix $\sigma$ and let $I_{\sigma}=\{\sigma (j)\colon k_{\sigma}\le j < k_{\sigma}+\ell_{\sigma}-1\}$
and $A=\{k\colon w_{k}\ge \frac{b}{4}\}$. Exactly as in the previous case, $|A|\le \frac{16}{b^{2}}$ (which may be rather large in the present case,
hence further arguments below) and
\[
( \ell_{\sigma}-1-|A\cap I_{\sigma}| )\,\frac{b}{4} \le  2\,.
\]
Therefore, if $\ell_{\sigma}\ge \frac{10}{b}$, then
$\bigl| A\cap \{\sigma (j)\colon k_{\sigma}\le j < k_{\sigma}+\frac{10}{b}-1\} \bigr| \ge \frac{1}{b}$.
Let $m=\lceil \frac{1}{b}\rceil$ and define
$$
\cE=\bigl\{ \sigma\colon |A\cap \{\sigma (j)\colon k\le j\le k+10m-1\}| \ge m\text{ for some }k\bigr\}.
$$
Then,  $\ell_{\sigma}\le \frac{10}{b}$ for $\sigma \not\in \cE$.

\medskip
We now need to bound the cardinality of the exceptional set $\cE$.
For any $\sigma\in \cE$, we associate a $(2m+1)$-tuple $(k, j_{1}, \ldots , j_{m}, a_{1}, \ldots , a_{m})$,
where $1\le k\le j_{1}< \ldots <j_{m}< k+10m-1\le n$ and  $a_{1},\ldots ,a_{m}$ are elements of $A$ such that
$\sigma(j_{1})=a_{1},\, \ldots \,, \sigma(j_{m})=a_{m}$. By definition of $\cE$, such a tuple exists, and if there is more than one, make an arbitrary choice to fix
one.

Now we get an upper bound for the number of distinct $(2m+1)$-tuples that can arise in this manner. Firstly, the number of choices for $k$ is less than $n$,
and having chosen $k$, the numbers $j_{1},\, \ldots\,, j_{m}$ may be chosen in less than $2^{10m}$ ways (any subset of $\{k, k+1,\, \ldots\,, k+10m-1\}$)
and after that choice, $a_{1},\, \ldots\,, a_{m}$ may be chosen in at most $|A|^{m}$ ways.

Finally, any given $(2m+1)$-tuple can come from at most $(n-m)!$ permutations, since the values of $\sigma(j_{1}),\, \ldots\,, \sigma(j_{m})$ are fixed.
Thus,
\[
\frac{1}{(n-1)!}\, |\cE| \le \frac{(n-m)!}{(n-1)!}\, n\, 2^{10m}\, \Bigl( \frac{16}{b^{2}}\Bigr)^{m}
\le \frac{n}{(n/2)^{\frac{1}{b}-1}}\, \Bigl( \frac{C}{b^{2}} \Bigr)^{1/b}  \le n^2 \Bigl( \frac{C}{nb^2}\Bigr)^{1/b}
\]
since $|A|\le \frac{16}{b^{2}}$ and $n-m\ge \frac{n}{2}$ (as $m=\lceil \frac{1}{b}\rceil \le 2\sqrt{n}$).
If $b\ge n^{-1/3}$ (in fact we may go up to $b\ge n^{-\frac12 + \delta}$ for any $\delta>0$), it is easy to see that the above quantity is bounded by
\[
n^2 \Bigl( \frac{C}{n^{-2/3}\cdot n}\Bigr)^{12} \le \frac{C}{n^2}\,.
\]

These manipulations are all valid for $n\ge n_{0}$ for some fixed $n_{0}$. Thus,
\begin{align}\label{eq:exceptionalset2}
\frac{1}{(n-1)!}\, |\cE| \le \frac{C}{n^{2}} \quad \text{ and }\ \ell_{\sigma}\le \frac{C}{b} \text{ for all }\sigma \not\in \cE\,,
\end{align}
where the constant $C$ can take care of all the cases when $n<n_{0}$.

\paragraph{3rd case: $\frac{1}{\sqrt{n}}\le b\le n^{-\frac{1}{3}}$.} Setting $A=\{k\colon w_{n}-w_{k}<\frac{b}{2}\}$ we see that $|A|\le \frac{n}{2}$
(because of the way we chose  $w_{n}$). Fix $\sigma$ and let $I_{\sigma}=\{\sigma (j)\colon k_{\sigma}\le j < k_{\sigma}+\ell_{\sigma}-1\}$. Again
\[
(\ell_{\sigma}-1-|A\cap I_{\sigma}|)\,\frac{b}{2}\le 2.
\]
Hence if $\ell_{\sigma}\ge \frac{12}{b}$, then $|A\cap I_{\sigma}|\ge \frac{7}{b}$.  Define
$$
\cE=\Bigl\{ \sigma\colon \bigl| A\cap \bigl\{ \sigma (j)\colon k\le j < k + \frac{12}b \bigr\} \bigr| \ge \frac7b \text{ for some }k \Bigr\}.
$$
Then,  $\ell_{\sigma}\le \frac{12}{b}$ for $\sigma \not\in \cE$. We want to bound the cardinality of $\cE$.

\medskip
Let $m=\lceil \frac{1}{b}\rceil$ and let $\cE=\bigcup_{k} \cE_{k}$ where
$$\cE_{k}=\bigl\{ \sigma\colon \bigl| A\cap \{\sigma (j)\colon k\le j\le k+12m-1\} \bigr| \ge 7m \bigr\}\,.$$
Fix $k\le n-12m+1$. The quantity $ \frac1{(n-1)!}\, \bigl| \cE_k \bigr| $ has the following interpretation.
Suppose we have a basket with $n-1$ different balls labeled by $\{1, 2, \, \ldots \, n-1\}$,
$|A|\le \frac12 n$ of these balls are black, while the rest are white.
We take, at random, $12m$ different balls (without returning them
to the basket). Then, $ \frac1{(n-1)!}\, \bigl| \cE_k \bigr| $ is the probability
that at least $7m$ of these $12m$ balls will be black. Whence, using e.g. Stirling's formula, for $n\ge n_0$,
\[
\frac1{(n-1)!}\, \bigl| \cE_k \bigr| \le e^{-cm}\,.
\]

This is the bound for fixed $k$. Add over $k$ to see that $\frac{1}{(n-1)!}\,|\cE|\le ne^{-cm}$. Since $m\ge n^{1/3}$, this gives
\begin{align}\label{eq:exceptionalset3}
\frac{1}{(n-1)!}\, |\cE| \le \frac{C}{n^{2}} \text{ and }\ell_{\sigma}\le \frac{C}{b} \text{ for all }\sigma \not\in \cE\,.
\end{align}
Again, the constant $C$ is adjusted so that the above estimates are also valid for $n<n_{0}$. This completes the third case.

\subsubsection{Completing the proof of Lemma~\ref{lem:mainanticoncentration}}

To finish the proof, recall that if $\sigma\not\in \cA$, then  $\BB\cap \widehat{\Sym}_{\sigma}$ is empty. Consequently,
\[
\frac{1}{n!}|\BB| \le \frac{1}{n!}\, \sum_{\sigma\in \cA} \ell_{\sigma}
\le \frac{1}{n!}\, \frac{C}{b}\, |\cA| + \frac{1}{n!}\, n\, |\cA\cap \cE|\,,
\]
by applying the bound $\ell_{\sigma} \le \frac{C}b$ for $\sigma\notin \cE$ as given in~\eqref{eq:exceptionalset1}, \eqref{eq:exceptionalset2}
and~\eqref{eq:exceptionalset3} and the trivial bound $\ell_{\sigma}\le n$ for $\sigma\in \cE$.
Note that $|\cA\cap \cE|\le |\cA|\wedge |\cE|\le \sqrt{|\cA|}\sqrt{|\cE|}$.
>From the bounds~\eqref{eq:exceptionalset1}, \eqref{eq:exceptionalset2} and~\eqref{eq:exceptionalset3}
on the cardinality of $\cE$
and the bound~\eqref{eq:firstcrudebound} on the cardinality of $\cA$, we get
\[
\frac{1}{n!}\, |\BB| \le \frac{C}{n}\,e^{-c|L|} + \frac{1}{(n-1)!}\, \sqrt{(n-1)!\frac{C}{n^{2}}} \cdot \sqrt{(n-1)!\, Cbe^{-c|L|}}
\le \frac{C'}{n}\,e^{-c'|L|}.
\]
This completes the proof of Lemma~\ref{lem:mainanticoncentration}.
\hfill $\Box$

\section{The proof of Lemma~\ref{lem:unimodal}}\label{sec:proofofprobestimates}

We conclude the paper by proving Lemma~\ref{lem:unimodal}, which says that the density of the distribution of the
sum $X=\sum_{i=1}^{n}w_{i}U_{i}$ (where $U_i$ are i.i.d. random variables with uniform distribution on $\bigl[ -\frac12, \frac12 \bigr]$) has the bound $p_{X}(t)\le
Ce^{-c |t|}$.
We will give two proofs of this fact. The first proof is based on properties of logarithmically
concave distributions.
The second proof combines the classical Bernstein-Hoeffding estimate with
a simple argument based on the Fourier transform.
Note that the second proof yields a somewhat stronger
conclusion that $p_X(t) \le Ce^{-c t^{2}}$. This, in turn,
improves the factor $e^{-c|L|}$ in Lemmas~$4'$ and~\ref{lem:mainanticoncentration} to $e^{-cL^2}$.

\subsection{Log-concavity}
A random vector in $\bR^{d}$ having density $p(\cdot)$ is said to be log-concave if the function $\log p\colon \bR^{d}\to \bR\cup\{-\infty\}$
is concave. For our purposes, it suffices to know the following two basic classes of examples and a general property of log-concave densities
in one dimension, given below in Lemma~\ref{lem:logconcave}.
\begin{enumerate}
\item If the random vector $X$ is uniformly distributed on a compact convex set $K$,
then $X$ is log-concave. Indeed, if $p$ is the density of $X$, then
\[
\log p(x) = \begin{cases}
\log(1/\text{vol}(K)) &\text{ if }x\in K, \\
-\infty &\text{ if }x\not\in K,
\end{cases}
\]
which is easily seen to be concave.
\item If $X$ is as above (uniform on a convex set in $\bR^{d}$), and $u\in \bR^{d}$ is any fixed vector, then $\<u,X\>$ is log-concave in one dimension. This is
    not an obvious fact, but is a consequence of the Pr\'{e}kopa-Leindler inequality~\cite{Prekopa, Gardner}.
\end{enumerate}
As a consequence, if $V$ is uniformly distributed on $[0,1]^{n}$ (a convex set) then, the scalar product $\<w,V\>$ is a log-concave random variable
in one dimension, for any $w\in \bR^{n}$.

Here is the one key (and well-known to experts) property of log-concave random variables that we need.
\begin{lemma}\label{lem:logconcave} Let $X$ be a real-valued, symmetric, log-concave random variable with unit variance. Then the density $p(\cdot)$ of $X$ satisfies
$p(t)\le Ce^{-c|t|}$ for all $t$ for some constants $C,c$.
\end{lemma}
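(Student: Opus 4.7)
The plan is first to pin down $p(0)$ between two universal positive constants by combining the unit variance normalization with log-concavity, and then to read off the exponential decay from concavity of $\log p$.

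By symmetry together with log-concavity, $p$ attains its maximum at $0$ and is non-increasing on $[0,\infty)$. The lower bound $p(0)\ge c$ follows from Chebyshev plus monotonicity: $\bP\{|X|\le 2\}\ge \tfrac34$ implies $\int_0^2 p\ge \tfrac38$, and since $p\le p(0)$ on $[0,2]$ we get $p(0)\ge \tfrac{3}{16}$.

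For the upper bound on $p(0)$ I introduce $\tau>0$ with $p(\tau)=\tfrac12 p(0)$ (well-defined since a log-concave density is continuous on the interior of its support and $p$ is integrable with $p(0)>0$). Concavity of $\log p$ forces
\[
p(s)\le p(0)\cdot 2^{-s/\tau}, \qquad s\ge \tau,
\]
while monotonicity gives $p(s)\ge \tfrac12 p(0)$ on $[0,\tau]$. Inserting these into $\int_0^\infty p(s)\,{\rm d}s = \tfrac12$ yields $\tau\le 1/p(0)$, and inserting them into $\int_0^\infty s^2 p(s)\,{\rm d}s=\tfrac12$ yields $\tau\ge c\, p(0)^{-1/3}$ (the second integral splits into a polynomial piece on $[0,\tau]$ and an $\int s^2 2^{-s/\tau}\,{\rm d}s = O(\tau^3)$ piece on $[\tau,\infty)$). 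Together these two inequalities force $p(0)^{2/3}\le C$, hence $p(0)\le C'$.

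With $p(0)\in[c,C]$ in hand, $\tau\le 1/p(0)$ is bounded by a universal constant $\tau_0$. The log-concave upper bound gives $p(s)\le p(0)\cdot 2^{-s/\tau}\le C\,e^{-c's}$ for $s\ge \tau$ with $c'=(\log 2)/\tau_0$, while for $s\in[0,\tau]$ we trivially have $p(s)\le p(0)\le C\le C\,e^{c'\tau_0}e^{-c's}$. Symmetry extends the estimate to all of $\bR$. The only nontrivial step is the upper bound on $p(0)$, which is where the unit variance and log-concavity genuinely interact; the rest is the elementary slope estimate from concavity of $\log p$ coupled with unimodality.
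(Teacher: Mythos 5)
Your proof is correct and follows essentially the same route as the paper: both identify the half-width scale (the paper's $B$, your $\tau$), use log-concavity to force geometric decay at that scale, and combine the two moment constraints $\int p =1$, $\int t^2 p=1$ to pin down $p(0)$ and the scale simultaneously --- your Chebyshev shortcut for the lower bound on $p(0)$ and the continuous bound $p(s)\le p(0)2^{-s/\tau}$ in place of the paper's discrete bound at integer multiples of $B$ are cosmetic variations. One small precaution: for a uniform density on a symmetric interval, $p$ never attains the value $\tfrac12 p(0)$, so (as the paper does) set $\tau=\inf\{t>0\colon p(t)\le\tfrac12 p(0)\}$ rather than solving $p(\tau)=\tfrac12 p(0)$; the same chord-slope argument still yields $p(s)\le p(0)2^{-s/\tau}$ for $s\ge\tau$.
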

The lemma remains valid if we drop the assumption of symmetry and instead assume that $X$ has zero mean, but the proof would be a bit longer.
Since we only apply this to symmetric random variables, we state only this weaker version.

\begin{proof}[Proof of Lemma~\ref{lem:logconcave}] Since $\log p$ is concave and symmetric, it is non-increasing on $(0,\infty)$ and non-decreasing on $(-\infty,
0)$. Define $B=\inf\{t>0\colon p(t)\le \frac12 p(0)\}$. Then $0<B <\infty$.  Further,  log-concavity shows that $p(kB)^{1/k} p(0)^{(k-1)/k}\le p(B) \le
\frac12 p(0)$, whence, $p(kB)\le p(0)2^{-k}$.  By the unimodality of $p$, we now have the bounds
\begin{align*}
p(t)&\ge \frac12\, p(0) \text{ if }t\in (0,B),\\
p(t)&\le p(0)2^{-k} \text{ if }t\in (kB,(k+1)B), \; k\ge 0.
\end{align*}
>From these bounds, we get
\begin{align}
\frac{1}{2}\, B p(0) &\le \int_{0}^{\infty}p(t)\, {\rm d}t \le 2B p(0), \label{eq:boundsforbp01} \\
\frac{1}{6}\, B^{3}p(0) &\le \int_{0}^{\infty}t^{2}p(t)\, {\rm d}t \le 12B^{3}p(0). \label{eq:boundsforbp02}
\end{align}
In the last inequality, we used the fact that $\sum_{k=0}^{\infty}2^{-k}(k+1)^{2}=12$ (any constant upper bound would suffice).
But, $\int_{0}^{\infty} p(t)\, {\rm d}t = \frac12 $ and $\int_{0}^{\infty}t^{2}p(t)\, {\rm d}t=\frac12\, \text{Var}[X]=\frac12$.
Combining with~\eqref{eq:boundsforbp01} and~\eqref{eq:boundsforbp02},
we see  that $c \le p(0)\le C$ and $c\le B\le C$ for some numerical constants $c,C$.

Thus, for $t\in [-B,B]$, we have $p(t)\le C$. Also, for $t>kB$ we have $p(t)\le p(0)2^{-k}$
which gives the exponential upper bound $p(t)\le C\, e^{-ct}$.
\end{proof}

\subsection{Proof of Lemma~\ref{lem:unimodal}}
As $U=(U_{1},\ldots ,U_{n})$ is uniformly distributed on $\bigl[ -\frac12 ,\frac12 \bigr]^{n}$,
we see that $X=\sum_{i=1}^{n}w_{i}U_{i}$ is a symmetric, log-concave random variable.
Its variance is $\frac{1}{12}\sum_{i=1}^{n}w_{i}^{2}$.
If $\sum_{i=1}^{n}w_{i}^{2}=1$, then, by Lemma~\ref{lem:logconcave}, the density of $X$ is bounded by $Ce^{-c |t|}$.
This proves the first part of the lemma.

\medskip
If $\|w\|^{2}=\sum_{i=1}^{n}w_{i}^{2}<1$, then let $Y=\frac{1}{\|w\|}X$.
By the first part, the density of $Y$ is bounded by $Ke^{-\kappa |t|}$.
The density of $X$ is given by
\[
p_{X}(t) = p_{Y}(t/\|w\|)\,\|w\|^{-1} \le \frac{C}{\|w\|}\, e^{-c |t|/\|w\|}\,.
\]
Since the function $x\to xe^{-c |t|x/2}$ is bounded by $\frac{2}{c |t|}$, we get the bound
\[
p_{X}(t) \le \frac{2C}{c |t|}\, e^{-c |t|/(2\|w\|)}\,.
\]
For $|t|\ge  1$ and $\|w\|\le 1$, this is less than or equal to $C'e^{-c'|t|}$.
\hfill $\Box$

\subsection{Another proof of (an improved version of)
	Lemma~\ref{lem:unimodal}}\label{sec:improvedinequality}

First, we recall the following classical inequality which goes back to Bernstein and Hoeffding.

\subsubsection{The Bernstein-Hoeffding inequality}\label{subsubsect:BH}
{\em If $X_{1},\ldots ,X_{n}$ are independent random variables with zero mean,  and such that $|X_{i}|\le a_{i}$ a.s., where $a_{i}$ are non-negative numbers.
Then, for any $t>0$,
\begin{align}\label{eq:hoeffding}
\bP\Bigl\{ \sum_{i=1}^{n}X_{k}\ge t \Bigr\} \le \exp \Bigl\{ -\frac12\, \frac{t^{2}}{\sum_{i=1}^{n}a_{i}^{2}} \Bigr\}\,.
\end{align}
}

\subsubsection{A bound for $p_X(t)$ when $|t|\ge 1$}
Now consider $X_{i}=w_{i}U_{i}$ as in Lemma~\ref{lem:unimodal}.
Clearly $X_{i}$ are independent  and $|X_{i}|\le \frac{1}{2}\, |w_{i}|$. Therefore, by~\eqref{eq:hoeffding}, we get
\[
\bP\{ X\ge t \} \le \exp\Bigl\{ -\frac{1}{2}\, \frac{t^{2}}{\sum_{i=1}^{n}(w_{i}/2)^{2}}\Bigr\}
\le e^{-2t^{2}}
\]
since $\sum_{i=1}^{n}w_{i}^{2}\le 1$.
To get a bound on the density from the bound on the tail, we observe that $X$ is a symmetric (i.e., $p_{X}(t)=p_{X}(-t)$) and unimodal\footnote{
The unimodality may be proved easily by induction on the number of summands.}
(that is,  $p_{X}$ is non-decreasing on $(-\infty,0]$ and non-increasing on $[0,\infty)$).
Therefore,  for any $t>0$ we get
\[
\frac{t}{2}\, p_{X}(t) \le \int_{t/2}^{t}p_{X}(s)\, {\rm d}s
\le \bP\{ X\ge t/2 \} \le e^{-t^2/2}\,.
\]
For $t>1$, this gives the bound $p_{X}(t)\le 2e^{-t^2/2}$.
By symmetry, the same bound holds for $t<-1$.
This is what we set out to prove when $|t|\ge 1$. \hfill $\Box$

\subsubsection{A bound for $p_X(t)$ when $|t|\le 1$}

Here, $X=\sum_i w_i U_i $, where $U_i$ are i.i.d. random variables uniformly distributed
on $\bigl[ -\frac12, \frac12 \bigr]$, and $\sum_i w_i^2 = 1$. We will show that, for $|t|\le 1$,
$ p_X(t) \le C$.

\medskip
Let $\delta$ be a small positive parameter, to be chosen near the end of the proof. If, for some $i$,
$ |w_i|\ge \delta $, then we use the estimate
\[
p_X(t) \le \frac1{\delta}\,, \qquad |t|\ge 1\,,
\]
which follows from the fact that the sup-norm of the density cannot increase
under convolution.
In what follows, we assume that $|w_i| < \delta$ for all $i\ge 1$.

The characteristic function of $X$ equals
\[
\bE \bigl[ e^{{\rm i}\lambda X } \bigr] = \prod_{i\ge 1} \frac{\sin (\lambda w_i)}{\lambda w_i}\,,
\]
so we need to show that
\[
\int_0^\infty \prod_{i \ge 1} \Bigl| \frac{\sin (\lambda w_i)}{\lambda w_i} \Bigr|\, {\rm d}\lambda \le C,
\]
provided that $ \sum_i w_i^2 = 1 $ and that $|w_i| < \delta$. We will be using that
\[
\Bigl| \frac{\sin\xi}{\xi} \Bigr| \le
\begin{cases}
e^{-c\xi^2}, &|\xi|\le 2, \\
\frac1{|\xi|}\,, &|\xi|\ge 1\,.
\end{cases}
\]

Let $I_j = [2^j, 2^{j+1}]$, $ j \ge 1 $, and let
$J_k = \bigl\{ i\colon 2^{-k} < |w_i| \le 2^{1-k} \bigr\}$, $k\ge 1$.
Then
\[
\frac14 \le \sum_{k\ge 1} |J_k| 2^{-2k} \le 1\,.
\]
For $\lambda\in I_j$, $i\in J_k$,
we have $2^{j-k} \le |\lambda w_i | \le 2^{2+j-k}$, whence,
\[
\Bigl| \frac{\sin (\lambda w_i)}{\lambda w_i} \Bigr| \le
\begin{cases}
e^{-c 2^{2(j-k)}}, & k \ge j, \\
2^{k-j}, & 1\le k \le j-1\,.
\end{cases}
\]
Thus,
\begin{multline}
\int_{I_j}\, \prod_{i \ge 1} \Bigl| \frac{\sin (\lambda w_i)}{\lambda w_i} \Bigr|\, {\rm d}\lambda
\le 2^j\, \prod_{1\le k \le j-1} 2^{(k-j)|J_k|} \,
\prod_{k\ge j} e^{-c2^{2(j-k)} |J_k|} \\
= 2^j\, \exp\Bigl[ -\log 2\, \sum_{1\le k \le j-1} (j-k) 2^{2k}\, 2^{-2k} |J_k|
- c 2^{2j} \sum_{k\ge j} 2^{-2k} |J_k| \Bigr]\,. \label{eq:density}
\end{multline}
Now, we fix sufficiently large positive integers $j_0$ and $k_0$ so that, for $j\ge j_0$, we have $c 2^{2j} \ge 8\log 2$
(here, $c$ is the
constant in the exponent on the right-hand side of~\eqref{eq:density}), and that, for
$j\ge j_0$, $k_0 \le k \le j-1$, we have $(j-k)2^{2k} \ge 8j$. Then, we take $\delta = 2^{-k_0}$. This choice guarantees
that $J_k = \emptyset$ for $1\le k < k_0$.
Then, for $j\ge j_0$,
\[
\text{right-hand side of~\eqref{eq:density}}
\le 2^j\, \exp\Bigl[ -(8\log 2)j\, \sum_{k\ge k_0} 2^{-2k}|J_k| \Bigr] \le 2^{-j}\,.
\]
Therefore,
\[
\int_0^\infty \, \prod_{i \ge 1} \Bigl| \frac{\sin (\lambda w_i)}{\lambda w_i} \Bigr|\, {\rm d}\lambda
= \Bigl(\, \int_0^{2^{j_0}} + \int_{2^{j_0}}^\infty \,\Bigr)\ \ldots \
\le 2^{j_0} + \sum_{j\ge j_0} 2^{-j} < 2^{j_0} +1,
\]
completing the proof. \hfill $\Box$

\section{Proof of Claim~\ref{lem:varianceofwdotvpi}}\label{sect:proof-of-claim9}

Let $V_{1},\ldots ,V_{n}$ be i.i.d. uniform random variables,
and $V_{(1)}<V_{(2)}<\ldots <V_{(n)}$ be their order statistics
(i.e., $V_{(j)}$ is the $j$th minimum of
the $V_{i}$s). Since the distribution of $(V_{(1)},V_{(2)},\ldots ,V_{(n)})$ is the same
as that of $((V_{\sigma})_{\sigma (1)},\ldots ,(V_{\sigma})_{\sigma (n)})$,
$\langle w,V_{\sigma} \rangle$ has the same distribution
as $\sum_{j=1}^{n}w_{\sigma(j)}V_{(j)}$. Put $X_j=V_{(j)}-V_{(j-1)}$,
$1\le j \le n+1$, with the convention that $V_{(0)}=0$
and $V_{(n+1)}=1$.
Thus, the lemma will follow if we consider
\[
Y = \sum_{j=1}^{n}w_{\sigma (j)}V_{(j)}
= \sum_{j=1}^n \alpha_j X_j
\]
with $\alpha_j = w_{\sigma (j)}+ \ldots + w_{\sigma (n)}$,
and show that $\operatorname{Var}[Y]\le \frac{1}{(n+1)(n+2)}
\sum_{j=1}^{n}\alpha_{j}^{2}$.

To show this,
we will need one property of the distribution of
the vector $(X_{1},\ldots ,X_{n+1})$,
that of exchangeability: if
the $X_{i}$s are permuted, the
resulting vector has the same distribution as $(X_{1},\ldots ,X_{n+1})$.
To see this, drop one of the $X_{i}$s and write the density of the remaining $n$ random variables with respect to Lebesgue measure on $\bR^{n}$.
A change of variables shows that the density is uniform on
$\{(t_{1},\ldots, t_{n})\colon t_{i}\ge 0, \ \sum_{i=1}^{n}t_{i}< 1\}$. Then, exchangeability is clear.

>From exchangeability, we see that $\bE [X_{i}]=\bE [X_{1}]$, $\bE [X_{i}^{2}]=\bE [X_{1}^{2}]$ and $\bE [X_{i}X_{j}]=\bE [X_{1}X_{2}]$ for all $i\not=j$. Further,
$X_{1}+\ldots +X_{n}=1$. Therefore, $\bE[X_{1}]=\frac{1}{n+1}$ and
\[
(n+1)\bE[X_{1}^{2}]+n(n+1)\bE[X_{1}X_{2}]=1
\]
since the left hand side is $\bE[(X_{1}+\ldots +X_{n+1})^{2}]$.

Since $X_{1}=V_{(1)}$ is the minimum of $n$ uniform random variables, we easily calculate that $\bE[X_{1}^{2}]=\frac{2}{(n+1)(n+2)}$. Then we also get
$\bE[X_{1}X_{2}]=\frac{1}{(n+1)(n+2)}$.

Thus, we get $\bE[Y]=\frac{1}{n+1}\sum_{j=1}^{n}\alpha_{j}$, and
\begin{align*}
\bE[Y^{2}]  &= \frac{2}{(n+1)(n+2)}\sum_{j=1}^{n}\alpha_{j}^{2}+\frac{1}{(n+1)(n+2)}\sum_{i\not= j}\alpha_{i}\alpha_{j} \\
&= \frac{1}{(n+1)(n+2)} \sum_{j=1}^{n} \alpha_{j}^{2} + \frac{1}{(n+1)(n+2)} \Bigl( \sum_{j=1}^{n} \alpha_{j} \Bigr)^{2}.
\end{align*}
Finally,
\begin{align*}
\operatorname{Var}[ \langle w,V_{\sigma} \rangle ] &= \bE[Y^{2}]-(\bE[Y])^{2} \\
&= \frac{1}{(n+1)(n+2)}\sum_{j=1}^{n}\alpha_{j}^{2} -
\frac{1}{(n+1)^2(n+2)} \Bigl( \sum_{j=1}^{n}\alpha_{j} \Bigr)^{2}
\end{align*}
which completes the proof. \qed

\medskip\noindent{\bf Remark.} The distribution of the $X_i$s appearing in the proof of
Claim \ref{lem:varianceofwdotvpi} is called the Dirichlet distribution with parameters
$n+1$ and $(1,\ldots,1)$.


\begin{thebibliography}{A}

\bibitem{BP} {\sc A. Bloch, G. P\'olya},
On the roots of certain algebraic equations.
Proc. London Math. Soc. {\bf 33} (1932), 102--114.

\bibitem{Esseen} {\sc C. G. Esseen}
On the Kolmogorov-Rogozin inequality for the concentration function.
Z. Wahrscheinlichkeitstheorie und Verw. Gebiete {\bf 5} (1966), 210--216.

\bibitem{Gardner} {\sc R. J. Gardner},
The Brunn-Minkowski inequality. \\
{\tt http://faculty.wwu.edu/gardner/gorizia12.pdf}

\bibitem{kacsignalnoise}
{\sc M. Kac},
Signal and noise problems. Amer. Math. Monthly {\bf 61}, (1954), 23--26.

\bibitem{kahane}
{\sc J.P. Kahane},
{Some random series of functions}, second ed.,
Cambridge University Press, Cambridge, 1985.

\bibitem{PolyaSzego}
{\sc G. P{\'o}lya, G. Szeg\H{o}},
{Problems and Theorems in Analysis {II}}.
Springer-Verlag, Berlin, 1998.

\bibitem{Prekopa}
{\sc A. Pr\'ekopa}
On logarithmic concave measures and functions.
Acta Sci. Math. (Szeged) {\bf 34} (1973), 335--343.

\bibitem{part1} {\sc K. S\"{o}ze},
{Real zeroes of random polynomials, I.
Flip-invariance, Tur\'an's lemma,
and the Newton-Hadamard polygon}.

\end{thebibliography}
\end{document}